\newtheorem{thm}{Theorem}[section]
\newtheorem{rmk}[thm]{Remark}
\newtheorem{prop}[thm]{Proposition}
\newtheorem{cor}{Corollary}[thm]
\newtheorem{lema}[thm]{Lemma}
\newtheorem{defi}[thm]{Definition}
\title[Stability of pencils of curves]{Stability of pencils of plane curves, log canonical thresholds and multiplicities}
\author{Aline Zanardini}
\address{University of Pennsylvania, Department of Mathematics, USA}
\email{alinez@math.upenn.edu}
\date{\today}
\begin{document}

\begin{abstract}
In this paper we study the problem of classifying pencils of curves of degree $d$ in $\mathbb{P}^2$ using geometric invariant theory. We consider the action of $SL(3)$ and we relate the stability of a pencil to the stability of its generators, to the log canonical threshold of its members, and to the multiplicities of its base points, thus obtaining explicit stability criteria.
\end{abstract}

\maketitle

\section{Introduction}

This work fits into a series of papers \cite{quadricsP4}, \cite{quartics}, \cite{cubicsP3}, \cite{stab}  on the stability (in the sense of geometric invariant theory) of pencils of hypersurfaces of a fixed degree in some projective space up to projective equivalence; however, the approach we consider here is new.  We focus on curves of degree $d$ in $\mathbb{P}^2$, and we obtain explicit stability criteria in terms of some known invariants of singularities. 

We relate the stability of a pencil $\mathcal{P}$ under the action of $SL(3)$ to the log canonical threshold of pairs $(\mathbb{P}^2,\mathcal{C}_d)$, where $\mathcal{C}_d$ is a curve in $\mathcal{P}$. Moreover, adapting the ideas in \cite{lct}*{Lemma 3.3}, we are also able to relate the stability of a pencil $\mathcal{P}$ to the multiplicities of its base points. In a forthcoming paper \cite{azstab} we will use these criteria and the results obtained in \cite{azconstr}  to provide a complete and geometric characterization of the stability of Halphen pencils of index two under the action of $SL(3)$, in terms of the type of singular fibers appearing in the associated rational elliptic surfaces. 

Letting $\mathscr{P}_d$ denote the space of all pencils of plane curves of degree $d$, our main results are given by Theorems \ref{m1}, \ref{m2} and \ref{m3} below.

\begin{thm}[= Theorem \ref{alpha}]
Let $\mathcal{P}$ be a pencil in $\mathscr{P}_d$ containing a curve $C_f$ such that $lct(\mathbb{P}^2,C_f)=\alpha$. If $\mathcal{P}$ is unstable (resp. not stable), then $\mathcal{P}$ contains a curve $C_g$ such that $lct(\mathbb{P}^2,C_g)< \frac{3\alpha}{2d\alpha-3}$ (resp. $\leq$).
\label{m1}
\end{thm}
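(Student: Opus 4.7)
The plan is to combine the Hilbert--Mumford numerical criterion for the pencil with a valuative estimate relating the log canonical threshold of a member of $\mathcal{P}$ to its weight under a destabilizing one-parameter subgroup.

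I first set up the Hilbert--Mumford side. Suppose $\mathcal{P}$ is unstable. Viewing $\mathcal{P}$ as a point in the Plücker embedding of the Grassmannian $\operatorname{Gr}(2, H^0(\mathbb{P}^2, \mathcal{O}(d)))$, there is a non-trivial one-parameter subgroup $\lambda$ of $SL(3)$ with $\mu(\mathcal{P}, \lambda) > 0$. After replacing $\mathcal{P}$ by a suitable $SL(3)$-translate (which preserves both stability and the lct of individual members), we may assume that $\lambda = \operatorname{diag}(t^{r_0}, t^{r_1}, t^{r_2})$ with $r_0 \geq r_1 \geq r_2$ and $r_0 + r_1 + r_2 = 0$; in particular $r_2 \leq 0$.

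The crucial technical input is a weight-versus-lct bound, essentially \cite{lct}*{Lemma 3.3} specialized to $SL(3)$: for every curve $C_h$ of degree $d$ in $\mathbb{P}^2$,
\[
\mu(h, \lambda) \leq r_2 \cdot \frac{d\gamma - 3}{\gamma}, \qquad \gamma := lct(\mathbb{P}^2, C_h).
\]
To prove it, consider the monomial valuation $\nu$ on the affine chart $\{z \neq 0\}$ with $\nu(x/z) = r_0 - r_2$ and $\nu(y/z) = r_1 - r_2$, centered at the attracting fixed point $[0:0:1]$ of $\lambda$. Its log discrepancy is $A(\nu) = (r_0 - r_2) + (r_1 - r_2) = -3r_2$, and log canonicity of $(\mathbb{P}^2, \gamma C_h)$ forces $\gamma\,\nu(h) \leq A(\nu)$. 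Writing each monomial weight as $i r_0 + j r_1 + k r_2 = i(r_0 - r_2) + j(r_1 - r_2) + d r_2$ shows $\mu(h, \lambda) = \nu(h) + d r_2$, and combining these gives the bound.

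Now I combine these inputs with the pencil structure. Set
\[
w_a := \min_{h \in \mathcal{P} \setminus 0} \mu(h, \lambda), \qquad w_b := \max_{h \in \mathcal{P} \setminus 0} \mu(h, \lambda);
\]
a direct Plücker computation yields $\mu(\mathcal{P}, \lambda) = w_a + w_b$, so $w_a + w_b > 0$. Applying the weight-versus-lct bound to $f$ gives $\mu(f, \lambda) \leq r_2(d\alpha - 3)/\alpha$, and since $f \in \mathcal{P}$ we have $w_a \leq \mu(f, \lambda)$; hence
\[
w_b > -w_a \geq -\mu(f, \lambda) \geq -r_2 \cdot \frac{d\alpha - 3}{\alpha}.
\]
Choose $g \in \mathcal{P}$ achieving $\mu(g, \lambda) = w_b$ (the maximum is attained because $h \mapsto \mu(h, \lambda)$ takes finitely many values on $\mathcal{P} \setminus 0$), and apply the estimate to $C_g$ with $\beta := lct(\mathbb{P}^2, C_g)$ to obtain $w_b \leq r_2(d\beta - 3)/\beta$. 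Combining the two inequalities and dividing by $r_2 < 0$ (which reverses the direction) yields, after rearrangement, $\beta < 3\alpha/(2d\alpha - 3)$, as claimed. For the non-stable case, one replaces $> 0$ by $\geq 0$ in the Hilbert--Mumford inequality and carries the non-strict version through every subsequent step.

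The hard part will be establishing the weight-versus-lct estimate in exactly the form needed: identifying the right fixed point of $\lambda$, computing $A(\nu)$ correctly in the degenerate situations where $r_0 = r_1$ or $r_1 = r_2$ (so that $\nu$ reduces to a weighted divisorial valuation along a coordinate line), and cleanly passing between the valuation of the dehomogenization of $h$ and the Hilbert--Mumford weight $\mu(h, \lambda)$. Once that estimate is available, the pencil-level bookkeeping with $w_a$ and $w_b$ is routine linear algebra.
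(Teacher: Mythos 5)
Your proposal is correct and is essentially the paper's own argument in a different normalization: your weight-versus-lct estimate $\mu(h,\lambda)\leq r_2(d\gamma-3)/\gamma$ is exactly Lemma \ref{boundlct} (Koll\'ar's Proposition 8.13) rewritten via $\mu(h,\lambda)=\omega(h,\lambda)+d\,r_2$, and your identity $\mu(\mathcal{P},\lambda)=w_a+w_b$ (of which only the inequality $\mu(\mathcal{P},\lambda)\leq w_a+w_b$ is needed) is the content of Propositions \ref{ineq} and \ref{twocurves}, with your maximal-weight member $C_g$ playing the role of the modified generator $g-\frac{g_{ij}}{f_{ij}}f$. The final bookkeeping and the treatment of the degenerate cases $r_1=r_2$ (multiple line through the center) match the paper's handling, so this is the same proof in substance.
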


\begin{thm}[= Theorem \ref{onequarter}]
If $\mathcal{P}\in \mathscr{P}_d$ is semistable (resp. stable), then $lct_p(\mathbb{P}^2,C_f)\geq \frac{3}{2d}$ (resp. $>$) for any curve $C_f$ in $\mathcal{P}$ and any base point $p$.
\label{m2}
\end{thm}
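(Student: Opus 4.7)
I would prove the contrapositive of the semistable assertion via the Hilbert--Mumford numerical criterion, producing a destabilizing one-parameter subgroup of $SL(3)$ directly from a local log canonical failure at a base point; the stable assertion follows from the same argument with strict inequalities replaced by non-strict ones.

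Suppose $lct_p(\mathbb{P}^2, C_f) < \frac{3}{2d}$. After applying an element of $SL(3)$, I may take $p = [1:0:0]$, and work with affine coordinates $u = x_1/x_0$, $v = x_2/x_0$. Since the log canonical threshold at a smooth surface point is computed by monomial valuations in suitable coordinates, I select positive integers $(a,b)$ such that the exceptional divisor $E$ of the $(a,b)$-weighted blow-up of $p$ satisfies $v_E(f) > \frac{2d(a+b)}{3}$. I then consider the one-parameter subgroup
\[
\lambda(t) = \mathrm{diag}\bigl(t^{-(a+b)},\,t^{2a-b},\,t^{2b-a}\bigr) \text{ in } SL(3).
\]
A direct computation shows that the $\lambda$-weight of the monomial $x_0^i x_1^j x_2^k$ with $i+j+k=d$ equals $3(ja+kb) - d(a+b)$, and hence the $\lambda$-weight of any $g \in H^0(\mathbb{P}^2, \mathcal{O}(d))$ equals $3\,v_E(g) - d(a+b)$, where $v_E(g)$ is read off from its local expansion at $p$.

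Let $w^{(2)}$ and $w^{(1)}$ denote the largest and smallest $\lambda$-weights attained over members of $\mathcal{P}$; the Hilbert--Mumford weight of $\mathcal{P}$ in its Plücker embedding equals $w^{(1)} + w^{(2)}$. The chosen bound on $v_E(f)$ yields $w^{(2)} > d(a+b)$, and because $p$ is a base point of $\mathcal{P}$ every $g\in\mathcal{P}$ vanishes there, so $v_E(g) \geq \min(a,b)$ and $w^{(1)} \geq 3\min(a,b) - d(a+b)$. Adding,
\[
w^{(1)} + w^{(2)} > 3\min(a,b) > 0,
\]
so $\lambda$ destabilizes $\mathcal{P}$ and the pencil is unstable. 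For the stable case one runs the same argument starting from $lct_p(\mathbb{P}^2, C_f) \leq \frac{3}{2d}$, obtaining $w^{(1)}+w^{(2)} \geq 3\min(a,b) > 0$ and concluding $\mathcal{P}$ is not stable.

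The most delicate point I expect is the very first step: realizing the log canonical failure by a weighted blow-up whose coordinates come from a \emph{linear} chart on $\mathbb{P}^2$, so that the associated monomial valuation extends to a genuine 1-PS inside $SL(3)$. In dimension two the lct at a smooth point is known to be realized by a monomial valuation in some coordinate system, but a priori the relevant coordinate change may involve non-linear terms; one must either argue that an $SL(3)$-translate of a linear chart suffices (exploiting the stabilizer of $p$) or approximate the true lct by weighted valuations in a fixed linear chart, with the resulting $\varepsilon$ absorbed into the open inequality $v_E(f) > \frac{2d(a+b)}{3}$.
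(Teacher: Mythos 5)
Everything downstream of your first step is sound and essentially parallels the paper's machinery: the weight of $x_0^ix_1^jx_2^k$ under your $\lambda$ is indeed $3(ja+kb)-d(a+b)$, the inequality $\mu(\mathcal{P},\lambda)\geq w^{(1)}+w^{(2)}$ is Proposition \ref{ineq} (with Proposition \ref{twocurves} giving equality), and using the base point to bound the weight of every other member by $3\min(a,b)-d(a+b)$ is correct; granting your first step, instability follows from Proposition \ref{unstable}. But that first step is a genuine gap, not a technicality to be postponed. From $lct_p(\mathbb{P}^2,C_f)<\tfrac{3}{2d}$ you need weights $(a,b)$ and a \emph{linear} chart in which $v_{a,b}(f)>\tfrac{2d(a+b)}{3}$, and neither of your proposed remedies closes this. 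The ``absorb an $\varepsilon$'' idea is unsound in principle: the infimum of $\tfrac{a+b}{v_{a,b}(f)}$ over weights in linear coordinates can exceed $lct_p$ by a definite factor rather than an arbitrarily small one. For instance, for $f=(v-u^2)^m$ the log canonical threshold is $\tfrac1m$, computed only after the non-linear substitution $v\mapsto v-u^2$, while the best linear-chart monomial bound is $\tfrac{3}{2m}$; no $SL(3)$-translate produces the needed chart. So what you actually need is a nontrivial assertion tied to the specific threshold $\tfrac{3}{2d}$ and to the degree bound (roughly: $lct_p<\tfrac{3}{2d}$ forces the failure to be visible on the Newton polygon in some linear chart, e.g.\ through a line of multiplicity $>\tfrac{2d}{3}$ at $p$), and your proposal contains no argument for it. The stable case inherits the same problem, with the extra wrinkle that a non-strict inequality must be realized by an actual integral weight vector.

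Note also that the paper takes a genuinely different route which avoids this issue entirely: it never argues by contrapositive and never tries to realize a log canonical threshold by a one-parameter subgroup. Instead, Proposition \ref{boundlct2} fixes a diagonal $\lambda$ adapted to the base point and proves the lower bound $lct_p(\mathbb{P}^2,C_f)\geq\frac{(a_x-a_z)+(a_y-a_z)}{\omega(\mathcal{P},\lambda)}$ for \emph{every} member, via the weighted cover $(u,v)\mapsto(u^{\omega(u)},v^{\omega(v)})$, Proposition 5.20(4) of Koll\'ar--Mori, and the pencil inequality $\omega(\mathcal{P},\lambda)\geq\omega(f,\lambda)$; semistability then bounds $\omega(\mathcal{P},\lambda)$ from above and the theorem follows. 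That direction requires no statement about the lct being computed by monomial valuations in linear coordinates, which is precisely the statement your contrapositive forces you to supply. Until you prove that statement (or otherwise bridge the analytic-versus-linear discrepancy), the proposal is incomplete at its crucial step.
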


\begin{thm}[= Theorem \ref{unsmult}]
Let $\mathcal{P}$ be a pencil in $\mathscr{P}_d$. If we can find two generators $C_f$ and $C_g$ of $\mathcal{P}$ such that  $mult_p(C_f) + mult_p(C_g)>\frac{4d}{3}$ (resp. $\geq$) for some base point $p$, then $\mathcal{P}$ is unstable (resp. not stable).
\label{m3}
\end{thm}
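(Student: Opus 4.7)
The plan is to apply the Hilbert-Mumford numerical criterion with an explicit destabilizing one-parameter subgroup of $SL(3)$, in the spirit of \cite{lct}*{Lemma 3.3}. A pencil $\mathcal{P}\in\mathscr{P}_d$ corresponds, via the Plücker embedding, to a point $[f\wedge g]\in\mathbb{P}(\wedge^2 V)$, where $V=H^0(\mathbb{P}^2,\mathcal{O}(d))$ and $f,g$ span $\mathcal{P}$. After an $SL(3)$-change of coordinates I may assume the base point $p$ is $(0:0:1)$, so that each of $f$ and $g$ is a sum of monomials $x^iy^jz^{d-i-j}$ with $i+j\geq m_f$ or $i+j\geq m_g$ respectively, where $m_f=\mathrm{mult}_p(C_f)$ and $m_g=\mathrm{mult}_p(C_g)$.

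I would then take the one-parameter subgroup $\lambda(t)=\mathrm{diag}(t^{-1},t^{-1},t^{2})$ of $SL(3)$, which fixes $p$. Its induced action on $V$ is diagonal in the monomial basis, with $x^iy^jz^{d-i-j}$ an eigenvector of weight $3(i+j)-2d$. A nonzero Plücker coordinate of $f\wedge g$ has the form $f_Ig_J-f_Jg_I$, where $e_I, e_J$ are monomial basis vectors of $V$; in each of the two summands one monomial lies in the support of $f$ and the other in that of $g$. Consequently every weight vector $e_I\wedge e_J$ appearing with nonzero coefficient in $f\wedge g$ has weight at least $(3m_f-2d)+(3m_g-2d)=3(m_f+m_g)-4d$.

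Finally I would invoke Hilbert-Mumford. If $m_f+m_g>\tfrac{4d}{3}$, every such weight is strictly positive and $\lim_{t\to 0}\lambda(t)\cdot(f\wedge g)=0$, which forces $\mathcal{P}$ to be unstable. If $m_f+m_g\geq\tfrac{4d}{3}$, every weight is non-negative, giving a 1-PS witness that $\mathcal{P}$ is not stable. The only real subtlety lies in choosing the 1-PS: it is essentially pinned down by the requirement that it fix $p$ and weight $x$ and $y$ symmetrically, so that both transverse directions at $p$ contribute equally to boosting the weight of low-multiplicity monomials; once $\lambda$ is fixed, the rest of the argument is a mechanical weight count.
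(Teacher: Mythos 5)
Your proof is correct and follows essentially the same route as the paper: after moving $p$ to $(0:0:1)$ you use the one-parameter subgroup with weights $(1,1,-2)$ (yours is the inverse matrix, compensated by your $f\mapsto f\circ g^{-1}$ convention, so the weights agree), observe that every nonzero Pl\"ucker coordinate has weight at least $3(\mathrm{mult}_p C_f+\mathrm{mult}_p C_g)-4d$, and conclude by Hilbert--Mumford. The only difference is cosmetic: the paper packages the weight count through $\omega(f,\lambda)+\omega(g,\lambda)\leq\omega(\mathcal{P},\lambda)$ (Proposition \ref{ineq}) and Proposition \ref{unstable}, whereas you redo that estimate directly on $f\wedge g$.
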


In particular, we extend and idea of Hacking \cite{hacking} and Kim-Lee \cite{kimlee} who observed the following  connection between two notions of stability, one coming from geometric invariant theory and the other coming from the Minimal Model Program: if $H\subset \mathbb{P}^n$ is a hypersurface of degree $d$ and the pair $\left(\mathbb{P}^n,\frac{n+1}{d} H\right)$ is log canonical, then $H$ is semistable for the natural action of $PGL(n+1)$. And if $\left(\mathbb{P}^n,(\frac{n+1}{d}+\varepsilon) H\right)$ is log canonical for some $0<\varepsilon \ll 1$, then $H$ is stable. 

One of the ingredients in our approach consists in observing that we can sometimes determine whether a pencil $\mathcal{P}\in \mathscr{P}_d$ is unstable (resp. not stable) or not by looking at the stability of its generators. We also prove Theorems \ref{m4}, \ref{m5} and \ref{m6} below:

\begin{thm}[= Corollary \ref{allmembers}]
If a pencil $\mathcal{P}\in \mathscr{P}_d$ has only semistable (resp. stable) members, then $\mathcal{P}$ is semistable (resp. stable).
\label{m4}
\end{thm}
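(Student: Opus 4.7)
The plan is to apply the Hilbert--Mumford numerical criterion to $\mathcal{P}$ regarded as a point of the Grassmannian $\mathscr{P}_d$ via its Plücker embedding, so that it suffices to show $\mu(\mathcal{P},\lambda) \geq 0$ (resp. $>0$) for every nontrivial one-parameter subgroup $\lambda$ of $SL(3)$. The key ingredient is the following $\lambda$-adapted decomposition: for each $\lambda$, there exists a basis $(f,g)$ of $\mathcal{P}$ such that
\begin{equation*}
\mu(\mathcal{P}, \lambda) \;=\; \mu(C_f, \lambda) \,+\, \mu(C_g, \lambda).
\end{equation*}
Once this identity is in hand, the corollary is essentially a one-line argument.

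To establish the decomposition I would diagonalize $\lambda$ on $V := H^0(\mathbb{P}^2,\mathcal{O}(d))$ in a weight eigenbasis, order the coordinates by increasing $\lambda$-weight, and put a $2 \times \dim V$ matrix representing a basis of $\mathcal{P}$ in row-echelon form with respect to this ordering. The two pivot columns then single out specific elements $f, g \in \mathcal{P}$, and a direct Plücker computation shows that the sum of their $\lambda$-weights equals the Hilbert--Mumford weight of the pencil. The delicate point --- which I expect to be the main (and really the only) technical step --- is that the analogous identity fails for a generic basis of $\mathcal{P}$: cancellations in the Plücker coordinates $f_i g_j - f_j g_i$ can make $\mu(\mathcal{P},\lambda)$ strictly larger than $\mu(C_f,\lambda) + \mu(C_g,\lambda)$, so the basis must genuinely be adapted to the $\lambda$-weight filtration of $V$ restricted to $\mathcal{P}$. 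This computation is standard and presumably appears in the paper's Hilbert--Mumford setup for pencils.

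Granting the decomposition, the proof proceeds as follows. Assume every curve in $\mathcal{P}$ is semistable, and let $\lambda$ be an arbitrary nontrivial one-parameter subgroup of $SL(3)$. Choose a basis $(f,g)$ of $\mathcal{P}$ as above, so that $\mu(\mathcal{P},\lambda) = \mu(C_f,\lambda) + \mu(C_g,\lambda)$. Since $C_f$ and $C_g$ are themselves members of $\mathcal{P}$, they are semistable by hypothesis, so each term on the right is non-negative. Hence $\mu(\mathcal{P},\lambda)$ is non-negative, and since $\lambda$ was arbitrary, the Hilbert--Mumford criterion yields semistability of $\mathcal{P}$. The stable version is the same argument with strict inequalities throughout, using that $C_f$ and $C_g$ are stable.
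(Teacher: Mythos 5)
Correct, and essentially the paper's own route: your $\lambda$-adapted (row-echelon) basis identity is exactly the content of Proposition \ref{twocurves} combined with Proposition \ref{ineq} (the paper's replacement of $g$ by $g-\frac{g_{ij}}{f_{ij}}f$ is precisely your pivot reduction), and the conclusion via the Hilbert--Mumford criterion, applied to the curves $C_f$, $C_g$ as members of $\mathcal{P}$, is the same. One cosmetic remark: with the sign convention in which semistability reads $\mu(\mathcal{P},\lambda)\geq 0$, cancellations in a non-adapted basis make $\mu(\mathcal{P},\lambda)$ strictly \emph{smaller} (not larger) than $\mu(C_f,\lambda)+\mu(C_g,\lambda)$; this slip occurs only in your parenthetical justification and does not affect the argument.
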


\begin{thm}[= Theorem \ref{1sss}]
If $\mathcal{P}\in \mathscr{P}_d$ contains at worst one strictly semistable curve (and all other curves in $\mathcal{P}$ are stable), then $\mathcal{P}$ is stable.
\label{m5}
\end{thm}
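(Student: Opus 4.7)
The plan is to apply the Hilbert--Mumford numerical criterion directly, showing that $\mu(\mathcal{P},\lambda)>0$ for every 1-parameter subgroup $\lambda$ of $SL(3)$. The key ingredient is a formula for the weight of a pencil in terms of the weights of two of its members. Viewing $\mathcal{P}$ as a point of $\mathbb{P}(\wedge^2 H^0(\mathbb{P}^2,\mathcal{O}(d)))$ via Pl\"ucker and choosing a monomial basis of $H^0$ diagonalising $\lambda$, one finds a unique distinguished member $C_g\in\mathcal{P}$ (the one whose polynomial lies in a strictly deeper piece of the $\lambda$-filtration on $H^0$) such that
\[
\mu(\mathcal{P},\lambda) \;=\; \mu(C_g,\lambda)+\mu(C_v,\lambda),
\]
where $C_v$ is \emph{any} other member of $\mathcal{P}$, the value $\mu(C_v,\lambda)$ being independent of the choice of such $C_v$. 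This is the identity that also underpins Theorem \ref{m4}, and I will take it as given.

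Now fix a 1-PS $\lambda$ and split on whether the distinguished member $C_g$ coincides with the unique strictly semistable member $C_0\in\mathcal{P}$. If $C_g=C_0$, semistability of $C_0$ yields $\mu(C_g,\lambda)\geq 0$, and choosing $C_v\neq C_g$ to be any stable member gives $\mu(C_v,\lambda)>0$, whence $\mu(\mathcal{P},\lambda)>0$. If instead $C_g\neq C_0$, then $C_g$ is stable and $\mu(C_g,\lambda)>0$; one may still take $C_v$ stable (any member of $\mathcal{P}$ distinct from both $C_g$ and $C_0$, which exists since the pencil has infinitely many members), giving $\mu(C_v,\lambda)>0$ and again $\mu(\mathcal{P},\lambda)>0$. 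Because $\lambda$ was arbitrary, $\mathcal{P}$ is stable by Hilbert--Mumford.

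The only genuine obstacle is the weight decomposition formula itself: one has to pin down the distinguished curve $C_g$ attached to a given $\lambda$ and verify that every other member of the pencil contributes the same $\lambda$-weight. Once that lemma is available --- and it is presumably one of the structural results established earlier in the paper to prove Theorem \ref{m4} --- Theorem \ref{m5} follows immediately from the brief two-case analysis above. Sign conventions do not affect the argument: if one instead parametrises stability by $\mu(\cdot,\lambda)<0$ (and semistability by $\mu(\cdot,\lambda)\leq 0$), every inequality flips consistently and the conclusion is the same.
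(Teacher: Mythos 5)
Your argument is correct and is essentially the paper's own: the decomposition you take as given is precisely the content of Propositions \ref{ineq} and \ref{twocurves} (for any member $C_f$ there is another member $C_g$ with $\omega(\mathcal{P},\lambda)=\omega(f,\lambda)+\omega(g,\lambda)$), and stability then follows, exactly as in the paper, by pairing the at-worst-semistable member with a stable one so that one inequality is strict. The only cosmetic differences are that the paper avoids your two-case split by applying Proposition \ref{twocurves} directly to the strictly semistable curve (its companion is automatically a distinct, hence stable, member), and that your ``unique distinguished member'' need not be unique when the induced weight filtration on the pencil does not jump --- though the identity, and hence your argument, is unaffected in that degenerate case.
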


\begin{thm}[= Theorem \ref{2sss}]
If $\mathcal{P}\in \mathscr{P}_d$ contains at worst two semistable curves $C_f$ and $C_g$ (and all other curves in $\mathcal{P}$ are stable), then $\mathcal{P}$ is strictly semistable if and only if there exists a one-parameter subgroup $\lambda$ (and coordinates in $\mathbb{P}^2$) such that $C_f$ and $C_g$ are both non-stable with respect to this $\lambda$.
\label{m6}
\end{thm}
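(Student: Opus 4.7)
The plan is to reduce both directions of the equivalence to a single computation of the Hilbert--Mumford weight $\mu(\mathcal{P},\lambda)$ of a pencil $\mathcal{P}$, viewed as a point of the Grassmannian $Gr\bigl(2,H^0(\mathcal{O}_{\mathbb{P}^2}(d))\bigr)$ in its Pl\"ucker embedding. If $f,g\in\mathcal{P}$ are a basis and one fixes coordinates diagonalizing a $1$-PS $\lambda$, the Pl\"ucker expansion $f\wedge g=\sum_{I<J}(a_Ib_J-a_Jb_I)\,x^I\wedge x^J$ yields, for every such basis, the basic inequality
\[
\mu(\mathcal{P},\lambda)\;\geq\;\mu(f,\lambda)+\mu(g,\lambda),
\]
with equality attained by a basis $(\tilde f,\tilde g)$ adapted to the $\lambda$-weight filtration on $\mathcal{P}$. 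I would invoke Theorem~\ref{m4} at the outset to conclude that $\mathcal{P}$ is already semistable, so $\mu(\mathcal{P},\lambda)\leq 0$ for every $\lambda$ and the issue is only whether equality is attained.

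For the ($\Leftarrow$) direction, I take the $\lambda$ and the coordinates provided by the hypothesis. Non-stability of both $f$ and $g$ with respect to $\lambda$ gives $\mu(f,\lambda),\mu(g,\lambda)\geq 0$, so the basic inequality applied to the basis $\{f,g\}$ yields $\mu(\mathcal{P},\lambda)\geq 0$; combined with the bound $\mu(\mathcal{P},\lambda)\leq 0$ from semistability this forces $\mu(\mathcal{P},\lambda)=0$, so $\mathcal{P}$ is not stable, hence strictly semistable.

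For the ($\Rightarrow$) direction, strict semistability supplies a $1$-PS $\lambda$ with $\mu(\mathcal{P},\lambda)=0$. I diagonalize $\lambda$ in appropriate coordinates and choose an adapted basis $(\tilde f,\tilde g)$ of $\mathcal{P}$ realizing $\mu(\tilde f,\lambda)+\mu(\tilde g,\lambda)=0$. Since every member of $\mathcal{P}$ is semistable, both $\mu(\tilde f,\lambda)$ and $\mu(\tilde g,\lambda)$ are $\leq 0$; summing to zero, each must vanish, so $C_{\tilde f}$ and $C_{\tilde g}$ are both non-stable with respect to $\lambda$. By hypothesis the only non-stable members of $\mathcal{P}$ are $C_f$ and $C_g$, and the linear independence of $\tilde f,\tilde g$ then forces $\{[\tilde f],[\tilde g]\}=\{[f],[g]\}$ in $\mathbb{P}(\mathcal{P})$. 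Up to relabeling, $f$ and $g$ are therefore both non-stable with respect to this $\lambda$ in these coordinates, as required.

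The main obstacle is producing the adapted basis in which the basic inequality becomes an equality: one takes $\tilde f\in\mathcal{P}$ with $\mu(\tilde f,\lambda)$ as small as possible, then $\tilde g$ independent with $\mu(\tilde g,\lambda)$ as small as possible modulo $\langle\tilde f\rangle$, so that the Pl\"ucker coordinate indexed by the two lowest-weight monomials appearing in $\tilde f$ and $\tilde g$ is nonzero and attains the minimum defining $\mu(\mathcal{P},\lambda)$. This is standard for Hilbert--Mumford weights of Grassmannian points, but it needs a careful statement when $\tilde f$ or $\tilde g$ is not itself a $\lambda$-eigenvector, and this is where the write-up will require the most attention.
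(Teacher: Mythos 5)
Your proposal is correct and follows essentially the same route as the paper: semistability of $\mathcal{P}$ from Corollary \ref{allmembers}, the superadditivity of weights over the Pl\"ucker expansion (Proposition \ref{ineq}) for the ``if'' direction, and the existence of a basis realizing equality of weights (Proposition \ref{twocurves} and its corollary) for the ``only if'' direction, with the non-stable basis members then identified with $C_f$ and $C_g$ exactly as in the paper. Your forward direction is in fact slightly more streamlined than the paper's case analysis via Corollary \ref{ctwocurves}; the only point to fix in the write-up is the description of the adapted basis, where one should take $\tilde g$ to be the representative eliminating the lowest-weight coefficient of $\tilde f$ (i.e.\ maximizing, not minimizing, its weight modulo $\langle\tilde f\rangle$), which is precisely the substitution $g\mapsto g-\frac{g_{ij}}{f_{ij}}f$ used in Proposition \ref{twocurves}.
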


\subsection*{Organization} The paper is organized as follows: We begin presenting some background material (Section \ref{back}). Then, in Section \ref{sc}, we relate the stability of a pencil to the stability of its generators. In Section \ref{stablct}, we use the notations and results from Section \ref{sc} to relate the stability of a pencil to the log canonical threshold of its members. Finally, in Section \ref{sm} we relate the stability of a pencil to the multiplicities of its base points.

\subsection*{Aknowledgments}
I am grateful to my advisor, Antonella Grassi, for her constant guidance, the many conversations and the numerous suggestions on earlier versions of this paper. This work is part of my PhD thesis and it was partially supported by a Dissertation Completion Fellowship at the University of Pennsylvania. 

\section{Background}
\label{back}

For the convenience of the reader we begin  by presenting the background material that will be needed later.

\subsection{The Log Canonical Threshold}
\label{lct}

We first recall the basic notions concerning log canonical pairs. We refer to \cite{singpairs} for a more detailed exposition. 

Let $X$ be a normal algebraic variety and let $\Delta=\sum d_iD_i \subset X$ be a $\mathbb{Q}$-divisor, i.e. a $\mathbb{Q}$-linear combination of prime divisors.

\begin{defi}
Given any birational morphism $\mu: \tilde{X}\to X$, with $\tilde{X}$ normal, we can write $K_{\tilde{X}} \equiv \mu^*(K_X+\Delta)+\sum a_E E$, where $E\subset \tilde{X}$ are distinct prime divisors, $a_E \doteq a(E,X,\Delta)$ are the discrepancies of $E$ with respect to $(X,\Delta)$ and a non-exceptional divisor $E$ appears in the sum if and only if $E=\mu_{*}^{-1}D_i$ for some $i$ (in that case with coefficient $a(E,X,\Delta)=-d_i$).
\end{defi}

\begin{defi}
A \textbf{log resolution} of the pair $(X,\Delta)$ consists of a proper birational morphism $\mu: \tilde{X}\to X$ such that $\tilde{X}$ is smooth and $\mu^{-1}(\Delta)\cup Exc(\mu)$ is a divisor with global normal crossings.
\label{logres}
\end{defi}

\begin{defi}
We say $(X,\Delta)$ is  \textbf{log canonical} (lc) if $K_X+\Delta$ is $\mathbb{Q}$-Cartier and given any log resolution $\mu:\tilde{X}\to X$ we have $K_{\tilde{X}} \equiv \mu^*(K_X+\Delta)+\sum a_E E$ with all $a_E\geq - 1$. In particular, if $X$ is smooth and $\Delta=d_iD_i$ is simple normal crossings, then $(X,\Delta)$ is log canonical if and only if $d_i\leq 1$ for all $i$.
\end{defi}

\begin{defi}
The number $lct(X,\Delta) \doteq \sup \{\,\,t\,\,;\,\, (X,t\Delta)\,\, \mbox{is log canonical}\}$ is called the \textbf{log canonical threshold} of $(X,\Delta)$. 
\end{defi}

\begin{rmk}
We can also consider a local version, $lct_p(X,\Delta)$, taking the supremum over all $t$ such that $(X,t\Delta)$ is log canonical in an open neighborhood of $p$, where $p\in X$ is a closed point.
\end{rmk}

\subsection{Geometric Invariant Theory}
\label{git}

 We now recall the relevant definitions and results from Geometric Invariant Theory. We point the reader to  \cite{dolgGIT} for more details.

The setup consists of a reductive group $G$ acting on an algebraic variety $X$ and we start by first assuming $X$ is affine. 

\begin{defi}
A point $x \in X$ is said to be semistable for the $G-$action if and only if $0\notin \overline{G\cdot x}$.
\end{defi}

\begin{defi}
A point $x \in X$ is said to be stable for the $G-$action if and only if the following two conditions hold:
\begin{enumerate}[(i)]
\item The orbit $G\cdot x \subset X$ is closed and
\item The stabilizer $G_x \leq G$ is finite
\end{enumerate}
\end{defi}

If $X \hookrightarrow \mathbb{P}^n$ is projective, a point $x\in X$ will be called semistable (resp. stable) if any point $\tilde{x}\in \mathbb{C}^{n+1}$ lying over $x$ is semistable (resp. stable). From now on we assume that this is the case.

\begin{defi}
A one-parameter subgroup of $G$ consists of a non-trivial group homomorphism $\lambda: \mathbb{C}^{\times} \to G$.
\end{defi}

Given a one-parameter subgroup $\lambda: \mathbb{C}^{\times}\to G$ we may regard $\mathbb{C}^{n+1}$ as a representation of $\mathbb{C}^{\times}$. Since any representation of $\mathbb{C}^{\times}$ is completely reducible and every irreducible representation is one dimensional, we can choose a basis $e_0,\ldots,e_n$ of $\mathbb{C}^{n+1}$ so that $\lambda(t)\cdot e_i = t^{r_i}e_i$, for some $r_i \in \mathbb{Z}$. Then, given $x\in X\hookrightarrow \mathbb{P}^n$ we can pick $\tilde{x}\in Cone(X)\subset \mathbb{C}^{n+1}$ lying above $x$ and write $\tilde{x}=\sum x_ie_i$ with respect to this basis so that $\lambda(t)\cdot x \doteq \lambda(t)\cdot \tilde{x}=\sum t^{r_i}x_ie_i$. The weights of $x$ are the set of integers $r_i$ for which $x_i$ is not zero.

\begin{defi}
Given $x \in X$ we define the Hilbert-Mumford weight of $x$ at $\lambda$ to be $\mu(x,\lambda)\doteq \min\{r_i\,\,:\,\,x_i \neq 0\}$.
\label{hmw}
\end{defi}

\begin{rmk}
The Hilbert-Mumford weight satisfies the following properties:
\begin{enumerate}[(i)]
\item $\mu(x,\lambda^{n})=n\mu(x,\lambda)$ for all $n \in \mathbb{N}$
\item $\mu(g\cdot x, g\lambda g^{-1})=\mu(x,\lambda)$ for all $g\in G$
\end{enumerate}
\label{phm}
\end{rmk}

The known numerical criterion for stability can thus be stated:

\begin{thm}[Hilbert-Mumford criterion]
Let $G$ be a reductive group acting linearly on a projective variety $X\hookrightarrow \mathbb{P}^n$. Then for a point $x\in X$ we have that $x$ is semistable (resp. stable) if and only if $\mu(x,\lambda)\leq 0$ (resp. $<$) for all one-parameter subgroups $\lambda$ of $G$.
\label{HM}
\end{thm}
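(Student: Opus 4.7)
The plan is to reduce to the affine cone $V = \mathbb{C}^{n+1}$: for any lift $\tilde x \in V$ of $x$, the linearization makes $x$ semistable precisely when $0 \notin \overline{G\cdot \tilde x}$, and stable precisely when $G\cdot \tilde x$ is closed in $V$ with finite stabilizer. The criterion thus becomes a purely affine statement about orbit closures for a reductive group $G$ acting linearly on $V$.

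The ``only if'' direction is then an explicit degeneration. Given a 1-PS $\lambda$ of $G$, choose a basis $e_0,\dots,e_n$ of $V$ diagonalizing it, so that $\lambda(t)\cdot e_i = t^{r_i}e_i$, and write $\tilde x = \sum x_ie_i$. If $\mu(x,\lambda) > 0$, every $r_i$ with $x_i\neq 0$ is positive, so $\lim_{t\to 0}\lambda(t)\cdot \tilde x = 0$, placing $0$ in $\overline{G\cdot \tilde x}$ and showing $x$ is not semistable. If instead $\mu(x,\lambda) = 0$, the same limit equals $\tilde y = \sum_{r_i=0} x_ie_i \neq 0$, which is fixed by the nontrivial subgroup $\lambda(\mathbb{C}^{\times})$; either $\tilde y\notin G\cdot \tilde x$ (so the orbit is not closed) or $\tilde y = g\cdot \tilde x$ for some $g$ (so the stabilizer contains a positive-dimensional subgroup). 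Either way, $x$ is not stable.

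The ``if'' direction is the substantive one and requires the Hilbert-Mumford-Iwahori-Kempf lemma: if a reductive group $G$ acts on an affine variety $V$, and $Y\subset V$ is a closed $G$-invariant subvariety meeting $\overline{G\cdot v}$, then there exists a 1-PS $\lambda$ of $G$ with $\lim_{t\to 0}\lambda(t)\cdot v \in Y$. Applying this with $v = \tilde x$ and $Y = \{0\}$ shows that if $x$ is not semistable, then some 1-PS achieves $\mu(x,\lambda) > 0$. For the stability refinement, I would apply the lemma with $Y$ equal to a closed $G$-orbit in $\overline{G\cdot \tilde x}\setminus G\cdot \tilde x$ (when the orbit fails to be closed), or instead take a 1-PS inside the identity component of the stabilizer (when that stabilizer is positive-dimensional), to exhibit $\mu(x,\lambda) \geq 0$.

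The main obstacle is precisely this Hilbert-Mumford-Iwahori-Kempf lemma. A standard proof reduces, via Matsushima's theorem and an Iwasawa-type decomposition, to the case where $G$ is replaced by a maximal torus $T$; there the weight-space decomposition makes the claim immediate. The non-trivial step is the transfer from $T$ to general $G$, which rests on the fact that, after a suitable conjugation, a generic element of the orbit can be degenerated into $Y$ using a 1-PS of $T$; rigorously establishing this constructibility argument is where the real work lies.
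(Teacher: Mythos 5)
This theorem is quoted in the paper as classical background (with a pointer to Dolgachev's book); the paper gives no proof of it, so there is nothing internal to compare against and your attempt must be judged on its own. Your reduction to the affine cone and your ``only if'' direction are correct and standard: $\mu(x,\lambda)>0$ forces $\lim_{t\to 0}\lambda(t)\cdot\tilde x=0$, and $\mu(x,\lambda)=0$ produces a nonzero $\lambda$-fixed limit point, which kills either closedness of the orbit or finiteness of the stabilizer.

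The genuine gap is in the ``if'' direction. You defer it entirely to the degeneration lemma (any closed $G$-invariant subset meeting $\overline{G\cdot\tilde x}$ is reached by a one-parameter subgroup limit), which \emph{is} the substantive content of the Hilbert--Mumford theorem, and you do not prove it; moreover your indication of how it is proved is not accurate. Matsushima's theorem together with an ``Iwasawa-type decomposition'' does not reduce the statement to a maximal torus: the standard arguments are Mumford's, via the Cartan--Iwahori decomposition $G(K)=G(\mathcal{O})\,\lambda\,G(\mathcal{O})$ over $K=\mathbb{C}((t))$, $\mathcal{O}=\mathbb{C}[[t]]$ (one first finds a destabilizing point of $G(K)$ by a valuative/properness argument and then uses the decomposition to replace it by a genuine one-parameter subgroup), or Kempf's instability theory; either route is a substantial argument that your sketch does not supply. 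There is also a smaller unjustified step in your stability refinement: in the subcase where the orbit is closed but the stabilizer is positive-dimensional, you take ``a 1-PS inside the identity component of the stabilizer,'' but a priori that identity component could be unipotent and contain no nontrivial one-parameter subgroup. What rescues this is precisely Matsushima's theorem (a closed orbit of a reductive group has reductive stabilizer, hence a positive-dimensional one contains a torus) --- that is where Matsushima belongs in the proof, not inside the degeneration lemma. As written, then, the proposal is a correct outline of the classical proof architecture, but the key lemma carrying all the difficulty is asserted rather than proved, and the proof strategy offered for it would not go through as described.
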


That is, a point $x\in X$ is unstable (resp. not stable) for the $G-$action if and only if there exists a one-parameter subgroup $\lambda:\mathbb{C}^{\times}\to G$ for which all the weights of $x$ are all positive (resp. non-negative).

In this paper we are interested in the case where $G$ is the group $SL(3)$ and $X$ is the space $\mathscr{P}_d$ of pencils of plane curves of degree $d$, embedded via Pl\"{u}cker coordinates in projective space.

\section{Stability Criterion for Pencils of Plane Curves}
\label{sc}

 Following the same approach as in \cite{stab}, we view a pencil of plane curves of degree $d$ as a choice of  line in the space of all plane curves of degree $d$. In other words, we identify the space $\mathscr{P}_d$ of all such pencils with the Grassmannian $Gr(2,S^dV^{\ast})$, where  $V \doteq H^0(\mathbb{P}^2,\mathcal{O}_{\mathbb{P}^2}(1))$. The latter, in turn, can be embedded in $\mathbb{P}(\Lambda^2 S^dV^{\ast})$ via Pl\"{u}cker coordinates. The group  $SL(V)$ acts naturally on $V$, hence on the invariant subvariety $\mathscr{P}_d$, and our goal is to describe the corresponding stability conditions. Since our main tool for that is criterion of Hilbert-Mumford, we need to know how the diagonal elements act on such coordinates. 

Concretely, choosing a pencil $\mathcal{P} \in \mathscr{P}_d$ and two curves $C_f$ and $C_g$ as generators, these represented (in some choice of coordinates) by $f=\sum f_{ij}x^iy^jz^{d-i-j}$ and $g=\sum g_{ij}x^iy^jz^{d-i-j}$ respectively, the Pl\"{u}cker coordinates of $\mathcal{P}$ are given by all the $2\times 2$ minors 
\[
m_{ijkl}\doteq \begin{vmatrix} f_{ij} & f_{kl} \\ g_{ij} & g_{kl} \end{vmatrix}
\]
Thus, the action of $\begin{pmatrix} \alpha & 0 & 0\\ 0 & \beta & 0\\ 0 & 0 & \gamma \end{pmatrix} \in SL(V)$ on the Pl\"{u}cker coordinates is given by 
\[
(m_{ijkl}) \mapsto (\alpha^{i+k}\beta^{j+l}\gamma^{2d-i-j-k-l}m_{ijkl})
\]

\subsection{The Stability of the Generators}
\label{stabgen}

It turns out that we are able to partially determine whether a pencil $\mathcal{P}\in \mathscr{P}_d$ is unstable (resp. not stable) or not by looking at the stability of its generators and, in particular, by looking at the log canonical threshold of its members. Therefore, from now on we will consider the actions of $SL(V)$ on both $\mathscr{P}_d$ and the space of plane curves of degree $d$.  

Our strategy consists in introducing an "affine" analogue of the Hilbert-Mumford weight (see Definition \ref{hmw}) and translate the numerical criterion of Hilbert-Mumford in terms of this quantity. More precisely, given a pencil $\mathcal{P}\in \mathscr{P}_d$ and a curve $C_f\in \mathcal{P}$, the idea is to use this affine weight to bound the log canonical threshold of the pair $(\mathbb{P}^2,C_f)$. The definition is as follows:

\begin{defi}
Given $\mathcal{P} \in \mathscr{P}_d$ and a one-parameter subgroup $\lambda: \mathbb{C}^{\times} \to SL(V)$ we define the affine weight of $\mathcal{P}$ at $\lambda$ to be
\[
\omega(\mathcal{P},\lambda)\doteq \min\{(a_x-a_z)(i+k)+(a_y-a_z)(j+l)\,\,:\,\,m_{ijkl}\neq 0\}
\]
\end{defi}

The inspiration for this definition comes from Definition 2.2 in \cite{kollarpol} and it is justified by Lemma \ref{boundlct}. The notations are the same as above and, even when omitted, we will always choose coordinates $[x,y,z]$ in $\mathbb{P}^2$ so that a one-parameter subgroup $\lambda$ is normalized. That is, it is given by
\begin{align}
\lambda: \mathbb{C}^{\times}  &\to SL(V) \nonumber\\
t &\mapsto\left( [x,y,z] \mapsto \begin{pmatrix} t^{a_x} & 0 & 0 \\ 0 & t^{a_y} & 0 \\ 0 & 0 & t^{a_z} \end{pmatrix}\cdot \begin {pmatrix}x\\y\\z \end{pmatrix}\right)
\label{normal}
\end{align}
for some weights $a_x,a_y,a_z \in \mathbb{Z}$ with $a_x \geq a_y \geq a_z, a_x>0$ and $a_x+a_y+a_z=0$. Then, stated in terms of $\omega(\mathcal{P},\lambda)$, the Hilbert-Mumford criterion becomes:

\begin{prop}
A pencil $\mathcal{P} \in \mathscr{P}_d$ is unstable (resp. not stable)  if and only if there exists a one-parameter subgroup $\lambda:\mathbb{C}^{\times} \to SL(V)$ and a choice of coordinates in $\mathbb{P}^2$ such that
\[
 \omega(\mathcal{P},\lambda)>\frac{2d}{3}(a_x+a_y-2a_z) \quad (\mbox{resp.}\,\,\geq )
\]
\label{unstable}
\end{prop}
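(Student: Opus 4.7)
The plan is to apply the Hilbert--Mumford criterion (Theorem \ref{HM}) to the Pl\"{u}cker embedding $\mathscr{P}_d \hookrightarrow \mathbb{P}(\Lambda^2 S^d V^{\ast})$, after reducing any one-parameter subgroup to the normalized diagonal form (\ref{normal}).

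First I would use the standard fact that every $\lambda\colon\mathbb{C}^{\times}\to SL(V)$ is conjugate, via some $g\in SL(V)$, to a diagonal one-parameter subgroup with integer weights summing to zero. Combined with the conjugation invariance $\mu(x,\lambda)=\mu(g\cdot x,g\lambda g^{-1})$ from Remark \ref{phm}(ii), this lets me assume $\lambda$ has the shape in (\ref{normal}); the price is a change of basis on $V$, i.e.\ a choice of coordinates in $\mathbb{P}^2$, which is exactly why the statement allows such a choice. Permuting the three diagonal entries arranges $a_x\geq a_y\geq a_z$, and nontriviality of $\lambda$ together with $a_x+a_y+a_z=0$ forces $a_x>0$.

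Next I would read off the weights. The computation of the action of $\lambda(t)=\mathrm{diag}(t^{a_x},t^{a_y},t^{a_z})$ on Pl\"{u}cker coordinates recalled at the start of this section shows that $m_{ijkl}$ has weight
\[
r_{ijkl}=a_x(i+k)+a_y(j+l)+a_z(2d-i-j-k-l).
\]
Using $a_x+a_y+a_z=0$ I rewrite this as
\[
r_{ijkl}=(a_x-a_z)(i+k)+(a_y-a_z)(j+l)+2d\,a_z,
\]
and the same relation gives $2d\,a_z=-\tfrac{2d}{3}(a_x+a_y-2a_z)$. Taking the minimum over indices with $m_{ijkl}\neq 0$ yields the key identity
\[
\mu(\mathcal{P},\lambda)=\omega(\mathcal{P},\lambda)-\tfrac{2d}{3}(a_x+a_y-2a_z).
\]

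To conclude, I would invoke Theorem \ref{HM}: $\mathcal{P}$ is unstable (resp.\ not stable) if and only if there exists some $\lambda$ with $\mu(\mathcal{P},\lambda)>0$ (resp.\ $\geq 0$), and by the identity above this translates directly into the inequality on $\omega(\mathcal{P},\lambda)$ in the statement. I do not expect any real obstacle; the proof is bookkeeping once the reduction to a normalized diagonal $\lambda$ is in hand. The only delicate point is justifying that "a choice of coordinates in $\mathbb{P}^2$" in the statement is precisely the basis change that diagonalizes $\lambda$, and the small algebraic trick of recognizing $2d\,a_z$ as $-\tfrac{2d}{3}(a_x+a_y-2a_z)$ so as to display the bound purely in terms of the normalized weights.
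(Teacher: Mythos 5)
Your proposal is correct and follows essentially the same route as the paper: read off the $\lambda$-weight $a_x(i+k)+a_y(j+l)+a_z(2d-i-j-k-l)$ of each nonzero Plücker coordinate, use $a_x+a_y+a_z=0$ to rewrite it as $(a_x-a_z)(i+k)+(a_y-a_z)(j+l)-\tfrac{2d}{3}(a_x+a_y-2a_z)$, and apply the Hilbert--Mumford criterion. Your explicit remarks on conjugating $\lambda$ to the normalized diagonal form and on the constancy of the shift term are points the paper leaves implicit, but they do not change the argument.
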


\begin{proof}
A pencil $\mathcal{P} \in \mathscr{P}_d$ is unstable (resp. not stable)  if and only if there exists a one-parameter subgroup $\lambda: \mathbb{C}^{\times} \to SL(V)$ and a choice of coordinates in $\mathbb{P}^2$ satisfying that for any $i,j,k$ and $l$ such that $m_{ijkl}\neq 0$ (in those coordinates) we have
\[
a_x(i+k)+a_y(j+l)+a_z(2d-i-j-k-l)>0 \quad (\mbox{resp.}\,\, \geq 0)
\] 
if and only if
\begin{eqnarray*}
(a_x-a_z)(i+k)+(a_y-a_z)(j+l)-\frac{2d}{3}(a_x+a_y-2a_z)>0 \quad (\mbox{resp.}\,\, \geq 0)
\end{eqnarray*}
\end{proof}

Similarly, we define an affine weight for plane curves of degree $d$:

\begin{defi}
Given a plane curve of degree $d$ $C_f$  and a one-parameter subgroup \linebreak $\lambda: \mathbb{C}^{\times} \to SL(V)$ we define the affine weight of $f$ at $\lambda$ to be
\[
\omega(f,\lambda)\doteq \min\{(a_x-a_z)i+(a_y-a_z)j\,\,:\,\,f_{ij}\neq 0\}
\]
\end{defi}

And for curves the Hilbert-Mumford criterion becomes:

\begin{prop}
A curve $C_f$ is unstable (resp. not stable)  if and only if there exists a one-parameter subgroup $\lambda:\mathbb{C}^{\times} \to SL(V)$ and a choice of coordinates in $\mathbb{P}^2$ such that
\[
\omega(f,\lambda)>\frac{d}{3}(a_x+a_y-2a_z) \quad (\mbox{resp.}\,\,\geq )
\]
\label{unstablec}
\end{prop}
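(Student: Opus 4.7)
The plan is to mimic the proof of Proposition \ref{unstable} line-for-line, replacing the Pl\"{u}cker-coordinate analysis of $\mathcal{P}$ with the analogous monomial analysis of $f=\sum f_{ij}x^iy^jz^{d-i-j}$.

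First, I would apply the Hilbert-Mumford criterion (Theorem \ref{HM}) directly to $f$, viewed as a point of $\mathbb{P}(S^dV^{\ast})$. A normalized one-parameter subgroup as in \eqref{normal} acts on the monomial $x^iy^jz^{d-i-j}$ with weight $a_xi+a_yj+a_z(d-i-j)$, so $C_f$ is unstable (resp.\ not stable) precisely when there exists $\lambda$ and a choice of coordinates such that
\[
a_xi+a_yj+a_z(d-i-j)>0 \quad (\text{resp.}\ \geq 0)
\]
for every $(i,j)$ with $f_{ij}\neq 0$.

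Next, I would rewrite the left-hand side as $(a_x-a_z)i+(a_y-a_z)j+da_z$ and use the normalization $a_x+a_y+a_z=0$, which yields $a_x+a_y-2a_z=-3a_z$, hence $da_z=-\tfrac{d}{3}(a_x+a_y-2a_z)$. Substituting gives the equivalent inequality
\[
(a_x-a_z)i+(a_y-a_z)j-\tfrac{d}{3}(a_x+a_y-2a_z)>0 \quad (\text{resp.}\ \geq 0),
\]
valid for all $(i,j)$ with $f_{ij}\neq 0$. By definition of $\omega(f,\lambda)$, taking the minimum over such pairs, this is exactly $\omega(f,\lambda)>\tfrac{d}{3}(a_x+a_y-2a_z)$ (resp.\ $\geq$), completing the equivalence.

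There is no real obstacle here: the argument is a purely formal translation of the Hilbert-Mumford numerical criterion using the normalization $a_x+a_y+a_z=0$, and the only thing to check carefully is the sign of $a_z$ and the identity $a_x+a_y-2a_z=-3a_z$, which match the analogous step in Proposition \ref{unstable} (with degree $d$ in place of $2d$, reflecting the fact that Pl\"{u}cker coordinates of a pencil are quadratic in the coefficients of its generators).
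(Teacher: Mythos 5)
Your proof is correct, and it is exactly the argument the paper intends: the paper omits a proof of this proposition precisely because it is the verbatim analogue of the proof of Proposition \ref{unstable}, with the weight $a_xi+a_yj+a_z(d-i-j)$ of the coefficient $f_{ij}$ in place of the Pl\"{u}cker weight and $d$ in place of $2d$. Your use of the normalization $a_x+a_y+a_z=0$ to get $a_x+a_y-2a_z=-3a_z$ is the same computation carried out there, so no gap remains.
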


Given a pencil $\mathcal{P}\in \mathscr{P}_d$ and a curve $C_f\in \mathcal{P}$, it is interesting to compare the affine weights $\omega(f,\lambda)$ and $\omega(\mathcal{P},\lambda)$  for a fixed one-parameter subgroup $\lambda$. We state and prove a series of Propositions in this direction that allow us to relate the stability of a pencil to the stability of its generators.

\begin{prop}
Given a pencil $\mathcal{P} \in \mathscr{P}_d$ and any two (distinct) curves $C_f,C_g\in \mathcal{P}$ we have that
\[
\omega(f,\lambda) \leq \omega(f,\lambda) + \omega(g,\lambda)  \leq \omega(\mathcal{P},\lambda) 
\]
for all one-parameter subgroups $\lambda:\mathbb{C}^{\times} \to SL(V)$.
\label{ineq}
\end{prop}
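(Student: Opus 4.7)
The plan is to prove the two inequalities separately, using only the normalization of $\lambda$ and the definition of the Plücker minor $m_{ijkl}=f_{ij}g_{kl}-f_{kl}g_{ij}$.

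For the left inequality $\omega(f,\lambda)\leq \omega(f,\lambda)+\omega(g,\lambda)$, the key observation is that since we have chosen coordinates making $\lambda$ normalized with $a_x\geq a_y\geq a_z$, both coefficients $a_x-a_z$ and $a_y-a_z$ are nonnegative. Every term $(a_x-a_z)i+(a_y-a_z)j$ in the definition of $\omega(g,\lambda)$ is then a nonnegative integer combination of the nonnegative integers $i$ and $j$, so $\omega(g,\lambda)\geq 0$, and the left inequality follows at once.

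For the right inequality $\omega(f,\lambda)+\omega(g,\lambda)\leq \omega(\mathcal{P},\lambda)$, I would argue monomial by monomial. Fix any quadruple $(i,j,k,l)$ with $m_{ijkl}\neq 0$. Since $m_{ijkl}=f_{ij}g_{kl}-f_{kl}g_{ij}$, at least one of the two products $f_{ij}g_{kl}$, $f_{kl}g_{ij}$ is nonzero, so at least one of the two pairs $(f_{ij},g_{kl})$ or $(f_{kl},g_{ij})$ consists of two nonzero coefficients. In either case one can bound
\[
(a_x-a_z)(i+k)+(a_y-a_z)(j+l)\geq \omega(f,\lambda)+\omega(g,\lambda),
\]
by splitting the sum into the part indexed by $(i,j)$ and the part indexed by $(k,l)$ (possibly swapping the two pairs) and applying the definitions of $\omega(f,\lambda)$ and $\omega(g,\lambda)$ to each piece. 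Taking the minimum over all nonzero $m_{ijkl}$ gives $\omega(\mathcal{P},\lambda)\geq \omega(f,\lambda)+\omega(g,\lambda)$.

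I do not anticipate a serious obstacle; the only point that requires a little care is the case split based on which of the two products in the determinant $m_{ijkl}$ is nonzero, to ensure that the nonvanishing coefficients are matched with the correct factor $\omega(f,\lambda)$ or $\omega(g,\lambda)$. Everything else follows from the nonnegativity of $a_x-a_z$ and $a_y-a_z$ under the normalization of $\lambda$.
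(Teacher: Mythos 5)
Your proposal is correct and follows essentially the same route as the paper: for the right inequality you observe that a nonzero minor $m_{ijkl}=f_{ij}g_{kl}-f_{kl}g_{ij}$ forces one of the pairs $(f_{ij},g_{kl})$ or $(f_{kl},g_{ij})$ to be entirely nonzero and then split the weight into the $(i,j)$ and $(k,l)$ parts, exactly as in the paper's case analysis. Your explicit remark that the left inequality is just $\omega(g,\lambda)\geq 0$ (from $a_x-a_z,a_y-a_z\geq 0$) fills in a step the paper leaves implicit.
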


\begin{proof}
Given $\mathcal{P}$ and  $\lambda:\mathbb{C}^{\times} \to SL(V)$, choose coordinates in $\mathbb{P}^2$ that normalize $\lambda$ and choose any two curves $C_f$ and $C_g$ of $\mathcal{P}$ so that $\mathcal{P}$ is represented by the Pl\"{u}cker coordinates $m_{ijkl}=f_{ij}g_{kl}-g_{ij}f_{kl}$.

Let $i,j,k$ and $l$ be such that $m_{ijkl}=f_{ij}g_{kl}-g_{ij}f_{kl}\neq 0$ and 
\[
\omega(\mathcal{P},\lambda)=(a_x-a_z)(i+k)+(a_y-a_z)(j+l)
\]
Then either $i$ and $j$ are such that $f_{ij}\neq0$ or $k$ and $l$ are such that $f_{kl}\neq 0$. In the first case there are two possibilities: either $g_{kl}=0$, which implies $g_{ij}\neq0$ and $f_{kl}\neq0$; or $g_{kl}\neq0$. Similarly, in the second case either $g_{ij}=0$, which implies $g_{kl}\neq0$ and $f_{ij}\neq0$; or $g_{ij}\neq0$.

In any case we have
\begin{eqnarray*}
(a_x-a_z)(i+k)+(a_y-a_z)(j+l)&=&\big((a_x-a_z)i+(a_y-a_z)j\big)+\\
& & + \big((a_x-a_z)k+(a_y-a_z)l\big)\\
&\geq& \omega(f,\lambda)+\omega(g,\lambda)
\end{eqnarray*}
\end{proof}

\begin{prop}
Given $\mathcal{P} \in \mathscr{P}_d$, a one-parameter subgroup $\lambda:\mathbb{C}^{\times} \to SL(V)$ and any curve $C_f\in \mathcal{P}$ there exists a curve $C_g$ in $\mathcal{P}$ such that
\[
\omega(\mathcal{P},\lambda) \leq \omega(f,\lambda) + \omega(g,\lambda) 
\]
\label{twocurves}
\end{prop}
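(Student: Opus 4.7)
The plan is to exploit the fact that a pencil is a two-dimensional subspace of $S^d V^*$, so we have a lot of freedom in choosing a second generator. The key observation is that, given a monomial achieving the minimum in $\omega(f,\lambda)$, we can always select a companion curve $C_g$ whose coefficient in front of that monomial vanishes, which forces the corresponding Plücker minor to be nonzero.

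Concretely, fix a one-parameter subgroup $\lambda$ in normalized form and a curve $C_f \in \mathcal{P}$. First I would pick indices $(i_0,j_0)$ with $f_{i_0 j_0}\neq 0$ realizing the minimum
\[
\omega(f,\lambda) = (a_x-a_z)i_0 + (a_y-a_z)j_0.
\]
Next, starting from any curve $C_h \in \mathcal{P}$ linearly independent from $C_f$, define
\[
g \doteq h - \frac{h_{i_0 j_0}}{f_{i_0 j_0}}\, f,
\]
so that $C_g$ still lies in $\mathcal{P}$ and satisfies $g_{i_0 j_0} = 0$. (If $h_{i_0 j_0}=0$ already, take $g=h$.)

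Now pick indices $(k_0, l_0)$ with $g_{k_0 l_0}\neq 0$ realizing $\omega(g,\lambda)=(a_x-a_z)k_0 + (a_y-a_z)l_0$. Because $g_{i_0 j_0}=0$ while $g_{k_0 l_0}\neq 0$, the pairs $(i_0,j_0)$ and $(k_0,l_0)$ are distinct, and the Plücker minor
\[
m_{i_0 j_0 k_0 l_0} = f_{i_0 j_0} g_{k_0 l_0} - g_{i_0 j_0} f_{k_0 l_0} = f_{i_0 j_0} g_{k_0 l_0}
\]
is nonzero. By the definition of $\omega(\mathcal{P},\lambda)$ as a minimum over nonvanishing minors, we immediately conclude
\[
\omega(\mathcal{P},\lambda) \leq (a_x-a_z)(i_0+k_0) + (a_y-a_z)(j_0+l_0) = \omega(f,\lambda) + \omega(g,\lambda),
\]
as desired.

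There is essentially no obstacle here beyond identifying the right companion curve; the only place to be slightly careful is verifying that $g_{i_0 j_0}=0$ combined with $g_{k_0 l_0}\neq 0$ really does guarantee that the two index pairs are distinct, so that the determinant does not collapse to zero. Once that is in hand, the estimate is immediate from the definitions and from Proposition \ref{ineq}, which shows the inequality is in fact an equality up to the choice of $g$.
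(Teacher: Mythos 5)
Your argument is correct and is essentially the paper's own proof: you pick the monomial realizing $\omega(f,\lambda)$, replace the second generator $h$ by $g=h-\frac{h_{i_0j_0}}{f_{i_0j_0}}f$ so that $g_{i_0j_0}=0$, and then the minor $m_{i_0j_0k_0l_0}=f_{i_0j_0}g_{k_0l_0}\neq 0$ at the minimizer of $\omega(g,\lambda)$ gives the bound. The extra check that $(i_0,j_0)\neq(k_0,l_0)$ is a fine (if minor) point that the paper leaves implicit.
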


\begin{proof}
Fix  $\lambda:\mathbb{C}^{\times} \to SL(V)$ and coordinates in $\mathbb{P}^2$ that normalize $\lambda$. Choose any two curves $C_f$ and $C_g$ of $\mathcal{P}$. Let $i$ and $j$ be such that $f_{ij}\neq 0$ and 
\[
\omega(f,\lambda)=(a_x-a_z)i+(a_y-a_z)j
\]
Replacing $g$ by $g'=g-\frac{g_{ij}}{f_{ij}}f$ we have $g_{ij}=0$, hence $m_{ijkl}\neq 0$ for all $k$ and $l$ such that $g_{kl}\neq 0$ and it follows that
\[
\omega(\mathcal{P},\lambda) \leq \omega(f,\lambda)+\omega(g,\lambda)
\]
\end{proof}

\begin{cor}
Given $\mathcal{P} \in \mathscr{P}_d$, a one-parameter subgroup $\lambda:\mathbb{C}^{\times} \to SL(V)$ and any curve $C_f\in \mathcal{P}$ there exists a curve $C_g$ in $\mathcal{P}$ such that
\[
\omega(\mathcal{P},\lambda) \leq 2 \max\{\omega(f,\lambda),\omega(g,\lambda) \}
\]
\label{ctwocurves}
\end{cor}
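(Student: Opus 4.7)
The plan is to deduce this directly from Proposition \ref{twocurves}. Given $\mathcal{P}$, a one-parameter subgroup $\lambda$, and a curve $C_f \in \mathcal{P}$, Proposition \ref{twocurves} supplies a curve $C_g \in \mathcal{P}$ satisfying
\[
\omega(\mathcal{P},\lambda) \leq \omega(f,\lambda) + \omega(g,\lambda).
\]
The right-hand side is at most twice the larger of the two summands, so
\[
\omega(\mathcal{P},\lambda) \leq \omega(f,\lambda) + \omega(g,\lambda) \leq 2\max\{\omega(f,\lambda),\omega(g,\lambda)\},
\]
which is exactly the desired inequality.

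Since Proposition \ref{twocurves} does the real work, there is no genuine obstacle: the only content of the corollary is the elementary bound $a+b \le 2\max\{a,b\}$. In particular, the curve $C_g$ that witnesses the conclusion is the same one produced in the proof of Proposition \ref{twocurves}, namely $g' = g - \frac{g_{ij}}{f_{ij}} f$ for any initial choice of $C_g \in \mathcal{P}$ distinct from $C_f$, where $(i,j)$ is a pair achieving the minimum in the definition of $\omega(f,\lambda)$. No further choice of coordinates or one-parameter subgroup is needed, and no additional normalization assumption enters beyond what is already used in Proposition \ref{twocurves}.
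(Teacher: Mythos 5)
Your proof is correct and matches the paper's (implicit) argument: the corollary is stated as an immediate consequence of Proposition \ref{twocurves}, using only the elementary bound $a+b\le 2\max\{a,b\}$, which holds for arbitrary integers so no extra hypotheses are needed.
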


\begin{cor}
Given $\mathcal{P} \in \mathscr{P}_d$, a one-parameter subgroup $\lambda:\mathbb{C}^{\times} \to SL(V)$ and any curve $C_f\in \mathcal{P}$ there exists a curve $C_g$ in $\mathcal{P}$ such that
\[
\omega(\mathcal{P},\lambda) = \omega(f,\lambda) + \omega(g,\lambda)
\]
\end{cor}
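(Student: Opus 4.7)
The plan is to obtain the stated equality by combining the two preceding results: Proposition \ref{ineq} provides the inequality $\omega(f,\lambda) + \omega(g,\lambda) \leq \omega(\mathcal{P},\lambda)$ for every pair of distinct curves in $\mathcal{P}$, while Proposition \ref{twocurves} produces, for the given $C_f$, a specific curve $C_g \in \mathcal{P}$ realizing the reverse inequality $\omega(\mathcal{P},\lambda) \leq \omega(f,\lambda)+\omega(g,\lambda)$. Thus the corollary reduces to applying both propositions to the same pair $(C_f, C_g)$.

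Concretely, given $\mathcal{P}$, $\lambda$ and $C_f \in \mathcal{P}$, I would fix coordinates in $\mathbb{P}^2$ that normalize $\lambda$ and invoke Proposition \ref{twocurves} to select a curve $C_g \in \mathcal{P}$ satisfying $\omega(\mathcal{P},\lambda) \leq \omega(f,\lambda)+\omega(g,\lambda)$. Then Proposition \ref{ineq}, applied to this very pair, yields $\omega(f,\lambda)+\omega(g,\lambda)\leq \omega(\mathcal{P},\lambda)$, and combining the two gives the desired equality.

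The only subtle point is a distinctness check: Proposition \ref{ineq} is stated for two \emph{distinct} curves in $\mathcal{P}$. This is not really an obstacle, since in the construction used to prove Proposition \ref{twocurves} the curve $C_g$ is obtained by subtracting a scalar multiple of $f$ from a generator that is already chosen distinct from $C_f$, so $C_g \neq C_f$ automatically. I would simply remark on this to keep the argument self-contained, and the whole proof then occupies no more than a few lines.
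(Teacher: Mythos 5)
Your proof is correct and is exactly the intended argument: the paper states this corollary without a separate proof precisely because it follows by combining Proposition \ref{twocurves} (which supplies $C_g$ with $\omega(\mathcal{P},\lambda) \leq \omega(f,\lambda)+\omega(g,\lambda)$) with Proposition \ref{ineq} applied to the same pair. Your remark that the constructed $C_g$ is automatically distinct from $C_f$ (being a nonzero combination $g - \tfrac{g_{ij}}{f_{ij}}f$ of independent generators) is a sensible, accurate touch.
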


\begin{cor}
If a pencil $\mathcal{P}\in \mathscr{P}_d$ has only semistable (resp. stable) members, then $\mathcal{P}$ is semistable (resp. stable).
\label{allmembers}
\end{cor}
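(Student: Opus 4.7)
\smallskip

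The plan is to prove the contrapositive: if $\mathcal{P}$ is unstable (resp.\ not stable), then $\mathcal{P}$ contains a member that is unstable (resp.\ not stable). This turns the corollary into a direct consequence of the comparison between $\omega(\mathcal{P},\lambda)$ and $\omega(f,\lambda)$ that has been built up in the preceding propositions, together with the reformulations of the Hilbert--Mumford criterion in Propositions \ref{unstable} and \ref{unstablec}.

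Concretely, first I would apply Proposition \ref{unstable}: the non-semistability (resp.\ non-stability) of $\mathcal{P}$ produces a one-parameter subgroup $\lambda$ and a choice of coordinates in $\mathbb{P}^2$ (which we may assume normalize $\lambda$ as in \eqref{normal}) such that
\[
\omega(\mathcal{P},\lambda)>\tfrac{2d}{3}(a_x+a_y-2a_z) \quad (\text{resp.}\ \geq).
\]
Next I would fix any curve $C_f\in\mathcal{P}$ and invoke Corollary \ref{ctwocurves} to produce a second curve $C_g\in\mathcal{P}$ with
\[
\omega(\mathcal{P},\lambda)\leq 2\max\{\omega(f,\lambda),\omega(g,\lambda)\}.
\]
Combining the two inequalities gives $\max\{\omega(f,\lambda),\omega(g,\lambda)\}>\tfrac{d}{3}(a_x+a_y-2a_z)$ (resp.\ $\geq$), so by Proposition \ref{unstablec} at least one of $C_f$ or $C_g$ is unstable (resp.\ not stable) for the same $\lambda$ and coordinate choice. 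This contradicts the hypothesis that every member of $\mathcal{P}$ is semistable (resp.\ stable), and the corollary follows.

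The argument is essentially a packaging exercise, since all the work has been done in the comparison lemmas: the factor of $2$ relating $\omega(\mathcal{P},\lambda)$ to the weights of individual members is exactly compensated by the factor of $2$ relating the Hilbert--Mumford thresholds $\tfrac{2d}{3}$ and $\tfrac{d}{3}$. The only point worth being careful about is ensuring that the same $\lambda$ (and the same normalized coordinates) are used throughout the chain of implications, so that Corollary \ref{ctwocurves} and Proposition \ref{unstablec} can both be applied without reselecting the subgroup; this is automatic because Corollary \ref{ctwocurves} holds for every $\lambda$.
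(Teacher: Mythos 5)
Your proof is correct and follows exactly the route the paper intends: the corollary is stated as an immediate consequence of Proposition \ref{twocurves}/Corollary \ref{ctwocurves} together with the Hilbert--Mumford reformulations (Propositions \ref{unstable} and \ref{unstablec}), which is precisely the chain you assemble (cf. the proof of Theorem \ref{1sss}, which runs the same argument in its direct rather than contrapositive form). The bookkeeping of the strict versus non-strict inequalities and the use of a single normalized $\lambda$ throughout are handled correctly.
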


\begin{cor}
If a pencil $\mathcal{P}\in \mathscr{P}_d$ contains only plane curves $C_d$ such that the pairs $\left(\mathbb{P}^2,3/dC_d\right)$ (resp.$\left(\mathbb{P}^2,\left(3/d+\varepsilon\right) C_d\right), 0<\varepsilon << 1$) are log canonical, then  $\mathcal{P}$ is semistable (resp. stable).
\label{main}
\end{cor}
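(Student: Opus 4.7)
The plan is to combine the Hacking/Kim-Lee observation (which is highlighted in the introduction) with the previously proved Corollary \ref{allmembers}. The hypothesis about the log canonical pairs is precisely the condition in the Hacking/Kim-Lee statement applied to plane curves ($n=2$), so it will translate pointwise into a GIT-stability condition on every single member of $\mathcal{P}$, and then Corollary \ref{allmembers} will globalize this to the pencil itself.

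More precisely, first I would unpack the hypothesis: saying that $\left(\mathbb{P}^2, \tfrac{3}{d}C_d\right)$ is log canonical is equivalent (by definition of the log canonical threshold) to $lct(\mathbb{P}^2,C_d) \geq \tfrac{3}{d}$, and in the strict case $lct(\mathbb{P}^2,C_d) > \tfrac{3}{d}$. Next I would invoke the Hacking/Kim-Lee criterion recalled in the introduction: for $n=2$ and a plane curve of degree $d$, if $\left(\mathbb{P}^2,\tfrac{3}{d}C_d\right)$ is log canonical then $C_d$ is semistable under $SL(3)$, and if $\left(\mathbb{P}^2,(\tfrac{3}{d}+\varepsilon)C_d\right)$ is log canonical for some $0<\varepsilon\ll 1$ then $C_d$ is stable. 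Hence every member of $\mathcal{P}$ is semistable (resp. stable).

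Finally, I would apply Corollary \ref{allmembers}: since every curve in $\mathcal{P}$ is semistable (resp. stable), the pencil $\mathcal{P}$ itself is semistable (resp. stable). This completes the chain hypothesis $\Rightarrow$ all members semistable/stable $\Rightarrow$ pencil semistable/stable.

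There is no real obstacle here: the corollary is a direct cascading consequence of two inputs that are already available, namely the Hacking/Kim-Lee bridge between log canonicity and GIT stability for a single hypersurface, and Corollary \ref{allmembers} which passes from stability of all members to stability of the pencil. The only subtlety to watch for is the correct handling of the strict inequality in the "stable" case, making sure the $\varepsilon$ in the hypothesis is used to get strict stability of each $C_d$ rather than only semistability before appealing to Corollary \ref{allmembers}.
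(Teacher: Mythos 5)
Your proposal is correct and is essentially the paper's own argument: the paper also invokes the Hacking/Kim--Lee observation to conclude that every member of $\mathcal{P}$ is semistable (resp.\ stable) and then applies Corollary \ref{allmembers}. Nothing further is needed.
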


\begin{proof}
As observed in \cite{hacking} and \cite{kimlee}, in this case all members of $\mathcal{P}$ are semistable (resp. stable).
\end{proof}

As a result of our comparison between $\omega(f,\lambda)$ and $\omega(\mathcal{P},\lambda)$ we prove Theorems \ref{1sss} and \ref{2sss} below:

\begin{thm}
If $\mathcal{P}\in \mathscr{P}_d$ contains at worst one strictly semistable curve (and all other curves in $\mathcal{P}$ are stable), then $\mathcal{P}$ is stable.
\label{1sss}
\end{thm}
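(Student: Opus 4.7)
The plan is to argue by contradiction, using Proposition \ref{unstable} to convert non-stability of $\mathcal{P}$ into a numerical inequality on $\omega(\mathcal{P},\lambda)$, and then splitting that weight along a well-chosen pair of generators to derive the desired contradiction.

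First I would assume $\mathcal{P}$ is not stable and invoke Proposition \ref{unstable} to obtain a one-parameter subgroup $\lambda:\mathbb{C}^{\times}\to SL(V)$ and normalizing coordinates on $\mathbb{P}^2$ with
\[
\omega(\mathcal{P},\lambda) \geq \frac{2d}{3}(a_x+a_y-2a_z).
\]
Next I would pick a stable generator $C_f \in \mathcal{P}$; such a curve exists because a pencil is a $\mathbb{P}^1$ of curves whereas at most one member is permitted to be non-stable. The equality corollary to Proposition \ref{twocurves} then produces a companion $C_g \in \mathcal{P}$ satisfying
\[
\omega(\mathcal{P},\lambda) = \omega(f,\lambda) + \omega(g,\lambda).
\]

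To finish, I would bound each summand via Proposition \ref{unstablec}: stability of $C_f$ forces the strict bound $\omega(f,\lambda) < \frac{d}{3}(a_x+a_y-2a_z)$, while $C_g$ is at worst semistable (either itself stable, or the unique strictly semistable member of $\mathcal{P}$) and so satisfies $\omega(g,\lambda) \leq \frac{d}{3}(a_x+a_y-2a_z)$. Adding these two bounds yields $\omega(\mathcal{P},\lambda) < \frac{2d}{3}(a_x+a_y-2a_z)$, contradicting the inequality coming from Proposition \ref{unstable}.

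No step here presents a serious obstacle; the argument is essentially a weighted accounting in which the strict inequality contributed by stability of $C_f$ absorbs the borderline contribution of a semistable $C_g$. The only point I would be careful about is that the decomposition in Proposition \ref{twocurves} can be anchored at an arbitrary $C_f$, so the freedom to choose $C_f$ stable is genuinely available and the single potentially bad curve in $\mathcal{P}$ can safely be absorbed into the role of $C_g$.
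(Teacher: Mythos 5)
Your proposal is correct and follows essentially the same route as the paper: split $\omega(\mathcal{P},\lambda)$ along two members via Proposition \ref{twocurves} (you use its equality corollary), then bound each summand by $\tfrac{d}{3}(a_x+a_y-2a_z)$ using Proposition \ref{unstablec}, with the strict bound from the stable member forcing the strict total bound. The only cosmetic differences are that you argue by contradiction rather than directly, and you anchor the decomposition at a stable curve (so the companion is at worst semistable) whereas the paper anchors at the strictly semistable curve (so the companion is stable); the inequalities combine identically either way.
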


\begin{proof}
Given $\mathcal{P}$ as above, if all curves in $\mathcal{P}$ are stable, then $\mathcal{P}$ is stable by Corollary \ref{allmembers}. Otherwise, let $C_f$ be the unique strictly semistable curve in $\mathcal{P}$. Given any one-parameter subgroup $\lambda$, by Proposition \ref{twocurves} there exists a curve $C_g$ such that
\[
\frac{\omega(\mathcal{P},\lambda)}{(a_x-a_z)+(a_y-a_z)}\leq \frac{\omega(f,\lambda)}{(a_x-a_z)+(a_y-a_z)}+ \frac{\omega(g,\lambda)}{(a_x-a_z)+(a_y-a_z)}
\]
And because $C_f$ (resp. $C_g$) is strictly semistable (resp. stable) it follows that
\[
 \frac{\omega(f,\lambda)}{(a_x-a_z)+(a_y-a_z)}\leq \frac{d}{3} \qquad \mbox{and} \qquad \frac{\omega(h,\lambda)}{(a_x-a_z)+(a_y-a_z)}<\frac{d}{3}
\]
and hence
\[
\frac{\omega(\mathcal{P},\lambda)}{(a_x-a_z)+(a_y-a_z)}<\frac{2d}{3}
\]
That is, $\mathcal{P}$ is stable.
\end{proof}

\begin{thm}
If $\mathcal{P}\in \mathscr{P}_d$ contains at worst two semistable curves $C_f$ and $C_g$ (and all other curves in $\mathcal{P}$ are stable), then $\mathcal{P}$ is strictly semistable if and only if there exists a one-parameter subgroup $\lambda$ (and coordinates in $\mathbb{P}^2$) such that $C_f$ and $C_g$ are both non-stable with respect to this $\lambda$ that is,
\[
 \frac{\omega(f,\lambda)}{(a_x-a_z)+(a_y-a_z)}= \frac{d}{3} \qquad \mbox{and} \qquad \frac{\omega(g,\lambda)}{(a_x-a_z)+(a_y-a_z)}=\frac{d}{3}
\]
\label{2sss}
\end{thm}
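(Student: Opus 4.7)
The plan is to combine the translation of the Hilbert--Mumford criterion given by Proposition \ref{unstable} with the two comparison results, Propositions \ref{ineq} and \ref{twocurves}, together with the hypothesis that the only pencil members which fail to be stable are $C_f$ and $C_g$.

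For the $(\Leftarrow)$ direction, suppose that some one-parameter subgroup $\lambda$ satisfies $\omega(f,\lambda)=\omega(g,\lambda)=\frac{d}{3}(a_x+a_y-2a_z)$. Proposition \ref{ineq} gives
\[
\omega(\mathcal{P},\lambda)\;\geq\;\omega(f,\lambda)+\omega(g,\lambda)\;=\;\tfrac{2d}{3}(a_x+a_y-2a_z),
\]
so by Proposition \ref{unstable} the pencil $\mathcal{P}$ is not stable. On the other hand, every member of $\mathcal{P}$ is semistable by hypothesis, so Corollary \ref{allmembers} yields that $\mathcal{P}$ is semistable. Hence $\mathcal{P}$ is strictly semistable.

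For the $(\Rightarrow)$ direction, assume $\mathcal{P}$ is strictly semistable. Since $\mathcal{P}$ is not stable, by Proposition \ref{unstable} there is a one-parameter subgroup $\lambda$ (in suitable coordinates) with $\omega(\mathcal{P},\lambda)\geq\frac{2d}{3}(a_x+a_y-2a_z)$; since $\mathcal{P}$ is also semistable, the reverse inequality holds, giving equality. Now apply Proposition \ref{twocurves} starting from $C_f$ to obtain $C_h\in\mathcal{P}$ with $\omega(\mathcal{P},\lambda)\leq\omega(f,\lambda)+\omega(h,\lambda)$. Using the semistability bound $\omega(f,\lambda)\leq\frac{d}{3}(a_x+a_y-2a_z)$ for $C_f$, we get $\omega(h,\lambda)\geq\frac{d}{3}(a_x+a_y-2a_z)$, meaning $C_h$ is not stable with respect to $\lambda$. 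By hypothesis this forces $C_h\in\{C_f,C_g\}$; inspecting the construction in the proof of Proposition \ref{twocurves} (the output curve satisfies $h_{ij}=0$ at the index where $f_{ij}\neq 0$ realizes $\omega(f,\lambda)$) shows $C_h$ is linearly independent from $C_f$, so $C_h=C_g$. The resulting inequality $\omega(f,\lambda)+\omega(g,\lambda)\geq\frac{2d}{3}(a_x+a_y-2a_z)$, combined with the individual semistability bounds for $C_f$ and $C_g$, forces $\omega(f,\lambda)=\omega(g,\lambda)=\frac{d}{3}(a_x+a_y-2a_z)$.

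The main technical point is the last step of the forward direction: pinning down that the curve $C_h$ produced by Proposition \ref{twocurves} is exactly $C_g$. This requires leveraging the \emph{explicit construction} of the proof of Proposition \ref{twocurves} rather than just its existential statement---namely, the fact that the produced curve is a specific combination of $C_f$ and a second generator with $h_{ij}=0$ at the weight-minimizing index where $f_{ij}\neq 0$, which rules out $C_h=C_f$. Once this is secured, the dichotomy ``stable, or one of $C_f, C_g$'' provided by the hypothesis and the numerical bound closes the argument.
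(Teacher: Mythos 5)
Your proof is correct and follows essentially the same route as the paper: the backward direction via Proposition \ref{ineq} together with Corollary \ref{allmembers}, and the forward direction by applying Proposition \ref{twocurves} to $C_f$ and using the semistability bounds to force both equalities. Your streamlined case analysis, which rules out $C_h=C_f$ by noting that the curve produced in the proof of Proposition \ref{twocurves} vanishes at the weight-minimizing index of $f$, makes explicit a distinctness point the paper uses only implicitly; otherwise the arguments coincide.
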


\begin{proof}
Fix $\mathcal{P}$ as above and note that $\mathcal{P}$ is semistable (Corollary \ref{allmembers}). First, note that if the two inequalities above hold for some $\lambda$, then $\mathcal{P}$ is strictly semistable by Proposition \ref{ineq}. Thus, assume $\mathcal{P}$ is strictly semistable. Then there exists a one-parameter subgroup $\lambda$  (and coordinates in $\mathbb{P}^2$) such that 
\[
 \frac{\omega(\mathcal{P},\lambda)}{(a_x-a_z)+(a_y-a_z)}= \frac{2d}{3}
\]
and, by Corollary \ref{ctwocurves}, it must exist a curve $C_h$ in $\mathcal{P}$ such that
\[
\frac{d}{3} \leq   \max\left\{\frac{\omega(f,\lambda)}{(a_x-a_z)+(a_y-a_z)}, \frac{\omega(h,\lambda)}{(a_x-a_z)+(a_y-a_z)}\right\} 
\]
In particular, either $C_f$ or $C_h$ is non-stable with respect to this $\lambda$. But $C_f$ and $C_g$ are the only potentially non-stable curves in $\mathcal{P}$. Therefore, either
\begin{equation}
 \frac{\omega(f,\lambda)}{(a_x-a_z)+(a_y-a_z)}\geq \frac{d}{3}
 \label{i1}
\end{equation}
or $C_h=C_g$ and 
\begin{equation}
 \frac{\omega(g,\lambda)}{(a_x-a_z)+(a_y-a_z)}\geq \frac{d}{3}
 \label{i2}
\end{equation}
In any case, we claim that the following two equalities hold
\[
 \frac{\omega(f,\lambda)}{(a_x-a_z)+(a_y-a_z)}= \frac{d}{3} \qquad \mbox{and} \qquad \frac{\omega(g,\lambda)}{(a_x-a_z)+(a_y-a_z)}=\frac{d}{3}
\]
In fact, if $C_h=C_g$ and (\ref{i2}) holds, then  
\[
 \frac{\omega(g,\lambda)}{(a_x-a_z)+(a_y-a_z)}= \frac{d}{3}
\]
because $C_g$ is semistable. Thus, by Proposition \ref{twocurves}, inequality (\ref{i1}) must be true also. 

Now, if (\ref{i1}) holds, then
\[
 \frac{\omega(f,\lambda)}{(a_x-a_z)+(a_y-a_z)}= \frac{d}{3}
\]
because $C_f$ is semistable. Thus, by Proposition \ref{twocurves}, we have that 
\[
\frac{\omega(h,\lambda)}{(a_x-a_z)+(a_y-a_z)}\geq \frac{d}{3}
\]
and, by assumption, it must be the case that $C_h=C_g$ (and (\ref{i2}) holds).

\end{proof}

\section{Stability and the log canonical threshold}
\label{stablct}

We are now ready to describe how $\omega(\mathcal{P},\lambda)$ and $\omega(f,\lambda)$ are related to the log canonical threshold of the pair $(\mathbb{P}^2,C_f)$. We begin by proving the following:

\begin{prop}
Given $\mathcal{P}\in \mathscr{P}_d$ and any base point $p$ of $\mathcal{P}$, there exists a one-parameter subgroup $\lambda: \mathbb{C}^{\times} \to SL(V)$ (and coordinates in $\mathbb{P}^2$) such that for any curve $C_f$ in $\mathcal{P}$ we have that 
\[
\frac{(a_x-a_z)+(a_y-a_z)}{\omega(\mathcal{P},\lambda)}\leq lct_p(\mathbb{P}^2,C_f)
\]
\label{boundlct2}
\end{prop}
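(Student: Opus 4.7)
The strategy is to combine the Koll\'ar--Mori-style bound (see \cite{lct}*{Lemma~3.3}) that Lemma~\ref{boundlct} will provide, relating $lct_p(\mathbb{P}^2,C_f)$ to the affine weight $\omega(f,\lambda)$, with Proposition~\ref{ineq} from Section~\ref{sc}, which relates $\omega(f,\lambda)$ to $\omega(\mathcal{P},\lambda)$. The key observation is that a single choice of coordinates and of $\lambda$, depending only on the base point $p$ and not on any particular curve, works uniformly for every member of the pencil.

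Concretely, first choose projective coordinates $[x,y,z]$ on $\mathbb{P}^2$ so that $p=[0:0:1]$, and fix the normalized one-parameter subgroup $\lambda$ with weights $(a_x,a_y,a_z)=(1,1,-2)$, so that both $a_x-a_z$ and $a_y-a_z$ are strictly positive. For any $C_f\in\mathcal{P}$, the dehomogenization $\bar{f}(x,y)=f(x,y,1)$ vanishes at the origin because $p$ is a base point (i.e.\ $f_{00}=0$), so Lemma~\ref{boundlct} applies and gives
\[
lct_p(\mathbb{P}^2,C_f)\;\geq\;\frac{(a_x-a_z)+(a_y-a_z)}{\omega(f,\lambda)}.
\]
Now pick any second generator $C_g$ of $\mathcal{P}$: since $a_x-a_z,\,a_y-a_z\geq 0$ we have $\omega(g,\lambda)\geq 0$, so Proposition~\ref{ineq} yields
\[
\omega(f,\lambda)\;\leq\;\omega(f,\lambda)+\omega(g,\lambda)\;\leq\;\omega(\mathcal{P},\lambda).
\]
Chaining these two inequalities produces the desired estimate. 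Note that $\omega(\mathcal{P},\lambda)>0$ because $m_{0000}\equiv 0$, so every nonzero Pl\"ucker coordinate $m_{ijkl}$ must contribute a strictly positive weight under the chosen $\lambda$.

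The only substantive input is Lemma~\ref{boundlct} itself; everything else is a direct assembly of Proposition~\ref{ineq} with the remark that the coordinates $[x,y,z]$ and the subgroup $\lambda$ can be fixed before choosing $C_f$, so the resulting bound automatically holds uniformly over the pencil.
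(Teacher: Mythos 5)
Your argument has a fatal direction error at its key step. Lemma \ref{boundlct} states $\frac{\omega(f,\lambda)}{(a_x-a_z)+(a_y-a_z)}\leq \frac{1}{lct(\mathbb{P}^2,C_f)}$, i.e.\ it gives the \emph{upper} bound $lct(\mathbb{P}^2,C_f)\leq \frac{(a_x-a_z)+(a_y-a_z)}{\omega(f,\lambda)}$, whereas you invoke it as the lower bound $lct_p(\mathbb{P}^2,C_f)\geq \frac{(a_x-a_z)+(a_y-a_z)}{\omega(f,\lambda)}$. That reversed inequality is not only unproved, it is false in general: with your choice $(a_x,a_y,a_z)=(1,1,-2)$ it would assert $lct_p(\mathbb{P}^2,C_f)\geq 2/\mathrm{mult}_p(C_f)$ for every member through $p$, which fails, for instance, for a member containing a double line through $p$ (local equation $u^2$ times a unit: $\mathrm{mult}_p=2$ but $lct_p=1/2$). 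The underlying reason is that a weighted multiplicity controls the discrepancy of only one (weighted) blow-up, so it can give an upper bound for the log canonical threshold but can never by itself certify log canonicity; hence the two sides of Lemma \ref{boundlct} cannot be interchanged, and the chain with Proposition \ref{ineq} collapses. (A smaller slip: $\omega(\mathcal{P},\lambda)>0$ does not follow from ``$m_{0000}\equiv 0$'', which holds for every pencil; the relevant fact is that $m_{00kl}=0$ for all $k,l$ because $p$ is a base point.)

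The paper's proof is structured quite differently, and the difference is essential. It fixes $p=(0:0:1)$, chooses weights $(1,a,-1-a)$, and argues by contradiction: if some member satisfied $lct_p(\mathbb{P}^2,C_f)<\frac{(a_x-a_z)+(a_y-a_z)}{\omega(\mathcal{P},\lambda)}$, then setting $c=\frac{\omega(u)+\omega(v)}{\omega(\mathcal{P},\lambda)}$ and pulling back under $(u,v)\mapsto (u^{\omega(u)},v^{\omega(v)})$, Proposition 5.20(4) in \cite{km} and the discrepancy computation $a(E;\mathbb{C}^2,\Delta)=-1+\omega(u)+\omega(v)-c\,\omega(f,\lambda)<-1$ force $\omega(\mathcal{P},\lambda)<\omega(f,\lambda)$, contradicting Proposition \ref{ineq}. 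Note that it is crucial that $\omega(\mathcal{P},\lambda)$, and not $\omega(f,\lambda)$, appears in the denominator of the statement: since $\omega(\mathcal{P},\lambda)\geq \omega(f,\lambda)+\omega(g,\lambda)$, the pencil weight carries an extra margin coming from the second generator, and it is exactly this margin that makes a lower bound on $lct_p$ possible at a base point, while no analogous bound holds for a single curve. Your proposal, which only ever compares $lct_p$ with $\omega(f,\lambda)$, cannot be repaired without reproducing an argument of this kind.
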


\begin{proof}
Given $\mathcal{P}$ and a base point $p$, we can always choose coordinates in $\mathbb{P}^2$ so that $p= (0:0:1)$.

Given any $a\in \mathbb{Q}\cap (-1/2,1]$, we can let $a_x=1,a_y=a$ and $a_z=-1-a$ and consider the one-parameter subgroup $\lambda$, which in these coordinates is normalized, as in (\ref{normal}). Then
\[
\frac{(a_x-a_z)+(a_y-a_z)}{\omega(\mathcal{P},\lambda)}=\frac{3(1+a)}{(2+a)(i+k)+(2a+1)(j+l)}
\]
for some $0\leq i,j,k,l \leq d$ such that $m_{ijkl}\neq 0$. 

Because $f_{00}=0$ for any curve $C_f$ in $\mathcal{P}$, we have that $m_{00kl}=0$ for all $0\leq k,l \leq d$. This implies
\[
\frac{3(1+a)}{(2+a)(i+k)+(2a+1)(j+l)} \leq 1
\]
for all $i,j,k,l$ such that $m_{ijkl}\neq 0$.

We claim that given $a\in \mathbb{Q}\cap (-1/2,1]$, the corresponding one-parameter subgroup $\lambda$ is such that for any curve $C_f$ in $\mathcal{P}$ we have
\[
\frac{(a_x-a_z)+(a_y-a_z)}{\omega(P,\lambda)}\leq lct_p(\mathbb{P}^2,C_f)
\]

By contradiction, assume there exists $C_f$ in $\mathcal{P}$ such that
\[
lct_p(\mathbb{P}^2,C_f) < \frac{(a_x-a_z)+(a_y-a_z)}{\omega(\mathcal{P},\lambda)}
\]

Write $\tilde{f}(u,v)=f(x,y,1)$ and assign weights $\omega(u)\doteq a_x-a_z=2+a$ to the variable $u$ and $\omega(v)\doteq a_y-a_z=2a+1$ to the variable $v$ so that the weighted multiplicity of $\tilde{f}$ is precisely $\omega(f,\lambda)$.  

Now, consider the finite morphism $\varphi:\mathbb{C}^2\to \mathbb{C}^2$ given by $(u,v)\mapsto (u^{\omega(u)},v^{\omega(v)})$ and let 
\[
\Delta\doteq (1-\omega(u))H_u+(1-\omega(v))H_v+ c\cdot \tilde{f}(u^{\omega(u)},v^{\omega(v)})
\]
 where $H_u$ (resp. $H_v$) is the divisor of $u=0$ (resp. $v=0$) and $c \in \mathbb{Q}\cap [0,1]$. Then
\[
\varphi^*(K_{\mathbb{C}^2}+c \cdot \tilde{f}(u,v))=K_{\mathbb{C}^2}+\Delta
\]
and by Proposition 5.20 (4) in \cite{km} we know that the pair $(\mathbb{C}^2,c \cdot \tilde{f})$ is log canonical at $(0,0)$ if and only if the pair $(\mathbb{C}^2,\Delta)$ is log canonical at $(0,0)$.

In particular, taking $ c=\frac{\omega(u)+\omega(v)}{\omega(\mathcal{P},\lambda)}> lct_p(\mathbb{P}^2,C_f)=lct_0(\mathbb{C}^2,\tilde{f})$ it follows that
\[
a(E;\mathbb{C}^2,\Delta)=-1+\omega(u)+\omega(v)-c \cdot \omega(f,\lambda)<-1
\]
where $E$ is the exceptional divisor of the blow-up of $\mathbb{C}^2$ at the origin and $a(E;\mathbb{C}^2,\Delta)$ is the corresponding discrepancy. But the above inequality is equivalent to the inequality $\omega(\mathcal{P},\lambda)<\omega(f,\lambda)$, which contradicts Proposition \ref{ineq}.
\end{proof}

Next, we recall the following known result:

\begin{lema}[\cite{singpairs}*{Proposition 8.13}]
Let $C_f$ be any plane curve. Then
\begin{equation}
\frac{\omega(f,\lambda)}{(a_x-a_z)+(a_y-a_z)}\leq \frac{1}{lct(\mathbb{P}^2,C_f)}
\label{ineqlct1}
\end{equation}
for any one-parameter subgroup $\lambda: \mathbb{C}^{\times} \to SL(V)$. 
\label{boundlct}
\end{lema}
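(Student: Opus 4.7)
The plan is to specialize the argument already carried out in the proof of Proposition \ref{boundlct2} from a pencil to the single curve $C_f$. First normalize $\lambda$ in coordinates $[x,y,z]$ so that $a_x\geq a_y\geq a_z$ (with $a_z<0$, as forced by $a_x>0$ and the trace-zero condition), and pass to the affine chart $z=1$, writing $\tilde f(u,v)=f(u,v,1)$. Since $a_x-a_z\geq 0$ and $a_y-a_z\geq 0$, the quantity $\omega(f,\lambda)$ is always non-negative, and the inequality is automatic when $\omega(f,\lambda)=0$, so I may assume $\omega(f,\lambda)>0$; in particular $f_{00}=0$ and $[0:0:1]\in C_f$. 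Assigning weights $\omega(u)=a_x-a_z$ and $\omega(v)=a_y-a_z$ to $u$ and $v$, the weighted multiplicity of $\tilde f$ at the origin coincides with $\omega(f,\lambda)$.

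The key step is to invoke the finite cover $\varphi:(u,v)\mapsto(u^{\omega(u)},v^{\omega(v)})$, for which
$$\varphi^{\ast}(K_{\mathbb{C}^2}+c\cdot\tilde f)=K_{\mathbb{C}^2}+\Delta,\qquad \Delta=(1-\omega(u))H_u+(1-\omega(v))H_v+c\cdot\varphi^{\ast}\tilde f,$$
and to apply Proposition 5.20(4) of \cite{km}, which identifies log canonicity of $(\mathbb{C}^2,c\cdot\tilde f)$ at the origin with log canonicity of $(\mathbb{C}^2,\Delta)$ there. A direct discrepancy computation at the exceptional divisor $E$ of the blow-up of $\mathbb{C}^2$ at the origin yields
$$a(E;\mathbb{C}^2,\Delta)=-1+\omega(u)+\omega(v)-c\cdot\omega(f,\lambda),$$
since the ordinary vanishing order of $\varphi^{\ast}\tilde f$ at the origin equals the weighted multiplicity $\omega(f,\lambda)$ of $\tilde f$.

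Whenever $c>\frac{(a_x-a_z)+(a_y-a_z)}{\omega(f,\lambda)}$ the above discrepancy falls strictly below $-1$, so $(\mathbb{C}^2,c\cdot\tilde f)$, and hence $(\mathbb{P}^2,c\cdot C_f)$, fails to be log canonical. Taking the supremum over admissible $c$ and inverting yields exactly the desired bound. The main obstacle I expect is the degenerate case $\omega(v)=0$, i.e.\ $a_y=a_z$, in which the cover $\varphi$ collapses in the $v$-direction and the identity between ordinary and weighted multiplicity breaks. This should be handled by a short perturbation argument: replacing $\lambda$ by a nearby rational one-parameter subgroup with $a_y-a_z$ arbitrarily small but positive, applying the bound, and passing to the limit, using that $\omega(f,\lambda)/((a_x-a_z)+(a_y-a_z))$ varies continuously with the rational weights while $lct(\mathbb{P}^2,C_f)$ is independent of $\lambda$.
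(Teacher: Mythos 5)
Your argument is correct, but it follows a different route from the paper's. The paper proves this lemma by reduction to a citation: in the non-degenerate case $a_y>a_z$ it passes to the chart $z=1$ exactly as you do, observes that $\omega(f,\lambda)$ is the weighted multiplicity of $\tilde f$, and then simply quotes Proposition 8.13 of Koll\'ar's \emph{Singularities of pairs} for the bound $\omega(f,\lambda)/\bigl((a_x-a_z)+(a_y-a_z)\bigr)\leq 1/lct_0(\mathbb{C}^2,\tilde f)$, finishing with $lct(\mathbb{P}^2,C_f)\leq lct_p=lct_0$. You instead re-derive that bound from scratch via the finite cover $\varphi$, the crepant pull-back formula and the discrepancy of the exceptional divisor of the blow-up of the origin --- which is precisely the technique the paper uses in the proof of Proposition \ref{boundlct2}, and your computation ($a(E)=-1+\omega(u)+\omega(v)-c\,\omega(f,\lambda)$, with the ordinary multiplicity of $\varphi^{*}\tilde f$ equal to the weighted multiplicity of $\tilde f$) is accurate; this makes the lemma self-contained at the cost of reproving the cited result. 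The degenerate case $a_y=a_z$ is also handled differently: the paper notes that then the inequality reduces to $c=\min\{i: f_{ij}\neq 0\}\leq 1/lct$, which is obvious for $c\leq 1$ and follows for $c\geq 2$ because $C_f$ then contains the line $x=0$ with multiplicity $c$, forcing $lct\leq 1/c$; your perturbation argument (approximating by normalized integer-weight one-parameter subgroups with $a_y-a_z>0$ small, using that $\omega(f,\lambda)$ is a minimum of finitely many linear functions of the weights, hence the ratio is continuous and scale-invariant, while the right-hand side does not depend on $\lambda$) is valid but slightly heavier than the paper's direct geometric observation.
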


\begin{proof}
Fix any one-parameter subgroup $\lambda: \mathbb{C}^{\times} \to SL(V)$ and choose coordinates in $\mathbb{P}^2$ so that $\lambda$ is normalized, as in (\ref{normal}). There are two possibilities: either $a_y>a_z$ (hence $a_x>a_z$) or $a_y=a_z$. Let us first consider the former.

If $p\doteq (0,0,1)\notin C_f$, then $f_{00}\neq0$, which implies $\omega(f,\lambda)=0$ and inequality (\ref{ineqlct1}) is true. Otherwise, we can write $\tilde{f}(u,v)=f(x,y,1)$ and assign weights $\omega(x)=a_x$ to the variable $x$, $\omega(y)=a_y$ to the variable $y$ and $\omega(z)=a_z$ to the variable $z$. Then $u$ has weight $a_x-a_z$, $v$ has weight $a_y-a_z$ and we have that the weighted multiplicity of $\tilde{f}$ is precisely $\omega(f,\lambda)$.

Proposition 8.13 in \cite{singpairs} tells us
\[
\frac{\omega(f,\lambda)}{(a_x-a_z)+(a_y-a_z)}\leq \frac{1}{lct_0(\mathbb{C}^2,\tilde{f})}
\]
and the result follows from the fact that $lct(\mathbb{P}^2,C_f) \leq lct_p(\mathbb{P}^2,C_f)=lct_0(\mathbb{C}^2,\tilde{f})$.

Finally, if we are in the situation when $a_y=a_z$, then 
\[
\omega(f,\lambda)=\min \{(a_x-a_z)i\,\,;\,\, f_{ij}\neq 0\}
\]
 and the desired inequality becomes
\[
 c\doteq \min \{i\,\,;\,\,f_{ij}\neq 0\}\leq \frac{1}{lct(\mathbb{P}^2,C_f)}
\]

If $c=0$ or $c=1$ the inequality is obvious. And if $c\geq 2$, then $C_f$ contains a line $(x=0)$ with multiplicity $c\geq 2$ and, again, the inequality is true. 
\end{proof}

In particular, we conclude from Corollary \ref{ctwocurves} that:

\begin{prop}
Given a pencil $\mathcal{P}\in \mathscr{P}_d$ we have that for any one-parameter subgroup $\lambda: \mathbb{C}^{\times}\to SL(V)$ there exists $C_f\in \mathcal{P}$ such that
\begin{equation}
 \frac{\omega(\mathcal{P},\lambda)} {(a_x-a_z)+(a_y-a_z)} \leq \frac{2}{lct(\mathbb{P}^2,C_f)}
\label{ineqlct}
\end{equation}
\label{rellctomega}
\end{prop}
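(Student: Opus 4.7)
The statement reads as a direct combination of two ingredients already available in the paper, and the hint in the text (\emph{we conclude from Corollary \ref{ctwocurves}}) fixes the route. My plan is therefore to chain Corollary \ref{ctwocurves} with Lemma \ref{boundlct}.

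First I would fix an arbitrary one-parameter subgroup $\lambda:\mathbb{C}^{\times}\to SL(V)$ and normalize coordinates so that $\lambda$ has weights $a_x\geq a_y\geq a_z$ with $a_x>0$ and $a_x+a_y+a_z=0$. A quick arithmetic remark I would record is that $(a_x-a_z)+(a_y-a_z)=-3a_z>0$, since under the normalization we must have $a_z<0$; this legitimizes dividing by this quantity later. Pick any curve $C_{f_0}\in\mathcal{P}$ to feed into Corollary \ref{ctwocurves}: this produces a curve $C_g\in\mathcal{P}$ with
\[
\omega(\mathcal{P},\lambda)\;\leq\;2\max\{\omega(f_0,\lambda),\,\omega(g,\lambda)\}.
\]

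Next I would let $C_f$ denote whichever of $C_{f_0}$ and $C_g$ realizes the maximum on the right-hand side, so that $\omega(\mathcal{P},\lambda)\leq 2\,\omega(f,\lambda)$. Dividing by the positive number $(a_x-a_z)+(a_y-a_z)$ and then applying Lemma \ref{boundlct} to this particular $C_f$ yields
\[
\frac{\omega(\mathcal{P},\lambda)}{(a_x-a_z)+(a_y-a_z)}\;\leq\;\frac{2\,\omega(f,\lambda)}{(a_x-a_z)+(a_y-a_z)}\;\leq\;\frac{2}{lct(\mathbb{P}^2,C_f)},
\]
which is the inequality asserted by the proposition.

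There is essentially no obstacle here: the only checkpoints worth stating explicitly are the positivity of $(a_x-a_z)+(a_y-a_z)$ under the normalization (so that the inequality directions are preserved when dividing) and the nonnegativity of the affine weights $\omega(\mathcal{P},\lambda)$ and $\omega(f,\lambda)$ (immediate from their definitions as minima of nonnegative combinations with nonnegative coefficients $a_x-a_z,a_y-a_z$). Everything else is a direct quotation of Corollary \ref{ctwocurves} and Lemma \ref{boundlct}, so the write-up should be short.
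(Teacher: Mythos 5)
Your proof is correct and follows exactly the route the paper intends: the proposition is stated there as a direct consequence of Corollary \ref{ctwocurves} combined with Lemma \ref{boundlct}, which is precisely your chain of inequalities (the positivity of $(a_x-a_z)+(a_y-a_z)=-3a_z$ under the normalization is a sensible detail to record, and the nonnegativity of the affine weights, while true, is not even needed). Nothing is missing.
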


And, as a consequence, we recover the statement from Corollary \ref{main}:

\begin{cor}
If $\mathcal{P}\in \mathscr{P}_d$ is a pencil such that $lct(\mathbb{P}^2,C_f)\geq 3/d$ (resp. $>3/d$) for any curve $C_f$ in $\mathcal{P}$, then $\mathcal{P}$ is semistable (resp. stable).
\end{cor}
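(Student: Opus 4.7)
The plan is to derive this corollary directly from Proposition \ref{rellctomega} combined with the reformulation of the Hilbert--Mumford criterion given in Proposition \ref{unstable}. Note that $(a_x-a_z)+(a_y-a_z) = a_x+a_y-2a_z$, and this is strictly positive for any normalized one-parameter subgroup (since $a_x > 0$ and $a_x \geq a_y \geq a_z$). So Proposition \ref{unstable} says exactly that $\mathcal{P}$ is semistable (resp. stable) if and only if
\[
\frac{\omega(\mathcal{P},\lambda)}{(a_x-a_z)+(a_y-a_z)} \leq \frac{2d}{3} \quad (\text{resp. } <)
\]
for every one-parameter subgroup $\lambda$ and every choice of normalizing coordinates.

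Given an arbitrary $\lambda$, I would then invoke Proposition \ref{rellctomega} to produce a curve $C_f \in \mathcal{P}$ with
\[
\frac{\omega(\mathcal{P},\lambda)}{(a_x-a_z)+(a_y-a_z)} \leq \frac{2}{lct(\mathbb{P}^2, C_f)}.
\]
Under the hypothesis $lct(\mathbb{P}^2,C_f) \geq 3/d$ (resp. $> 3/d$) for every curve in $\mathcal{P}$, the right-hand side is bounded above by $2d/3$ (resp. strictly less than $2d/3$). Combining these two inequalities yields the desired bound on $\omega(\mathcal{P},\lambda)/((a_x-a_z)+(a_y-a_z))$, and since $\lambda$ was arbitrary this forces $\mathcal{P}$ to be semistable (resp. stable) by the Hilbert--Mumford criterion.

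There is essentially no obstacle here: all of the hard work has already been done in establishing Proposition \ref{rellctomega} (which in turn rests on Lemma \ref{boundlct} and Corollary \ref{ctwocurves}). The proof is a one-line chain of inequalities, and the strict version simply tracks strict inequality through the same chain. The only mild subtlety is checking that the strict inequality $lct(\mathbb{P}^2,C_f) > 3/d$ for every $C_f$ propagates to a strict bound independent of $\lambda$, which is immediate since the bound $2/lct(\mathbb{P}^2,C_f)$ is already strictly less than $2d/3$ for each individual $C_f$ appearing in the application of Proposition \ref{rellctomega}.
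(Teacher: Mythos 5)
Your proof is correct and follows essentially the same route as the paper, which presents this corollary precisely as an immediate consequence of Proposition \ref{rellctomega} combined with the Hilbert--Mumford reformulation in Proposition \ref{unstable}. The chain of inequalities and the tracking of strictness are exactly as intended.
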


Proposition \ref{boundlct2} and Lemma \ref{boundlct} together with the other results obtained in this section, allow us to prove Theorems \ref{alpha} and \ref{onequarter} below. Both results relate the stability of $\mathcal{P}$ and the log canonical threshold of the pair $(\mathbb{P}^2,C_f)$ for $C_f\in \mathcal{P}$.

\begin{thm}
Let $\mathcal{P}$ be a pencil in $\mathscr{P}_d$ which contains a curve $C_f$ such that $lct(\mathbb{P}^2,C_f)=\alpha$. If $\mathcal{P}$ is unstable (resp. not stable), then $\mathcal{P}$ contains a curve $C_g$ such that $
lct(\mathbb{P}^2,C_g)< \frac{3\alpha}{2d\alpha-3}$ (resp. $\leq$).
\label{alpha}
\end{thm}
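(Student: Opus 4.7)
The plan is to combine the Hilbert-Mumford criterion as restated in Proposition \ref{unstable} with the comparison Proposition \ref{twocurves} between $\omega(\mathcal{P},\lambda)$ and the generator weights, and then apply the geometric bound of Lemma \ref{boundlct} to both of the chosen generators.

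Concretely, I would argue as follows. Assume $\mathcal{P}$ is unstable. By Proposition \ref{unstable} there exists a one-parameter subgroup $\lambda:\mathbb{C}^{\times}\to SL(V)$ and a choice of coordinates $[x,y,z]$ in $\mathbb{P}^2$ normalizing $\lambda$ such that
\[
\frac{\omega(\mathcal{P},\lambda)}{(a_x-a_z)+(a_y-a_z)}>\frac{2d}{3}.
\]
Applying Proposition \ref{twocurves} to the distinguished curve $C_f\in\mathcal{P}$ (with $lct(\mathbb{P}^2,C_f)=\alpha$), I obtain a curve $C_g\in\mathcal{P}$ such that $\omega(\mathcal{P},\lambda)\leq \omega(f,\lambda)+\omega(g,\lambda)$. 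Dividing by $(a_x-a_z)+(a_y-a_z)>0$ and using Lemma \ref{boundlct} on each of $C_f$ and $C_g$ yields
\[
\frac{2d}{3}<\frac{\omega(f,\lambda)+\omega(g,\lambda)}{(a_x-a_z)+(a_y-a_z)}\leq \frac{1}{\alpha}+\frac{1}{lct(\mathbb{P}^2,C_g)}.
\]
Rearranging gives $\frac{1}{lct(\mathbb{P}^2,C_g)}>\frac{2d\alpha-3}{3\alpha}$, and hence $lct(\mathbb{P}^2,C_g)<\frac{3\alpha}{2d\alpha-3}$. For the ``not stable'' case, the strict inequalities in Propositions \ref{unstable} and \ref{twocurves} get replaced by their non-strict versions in the same order, producing $lct(\mathbb{P}^2,C_g)\leq \frac{3\alpha}{2d\alpha-3}$.

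I do not anticipate a serious obstacle: all three ingredients are already in place, and the argument is just a careful chaining of three inequalities. The only point worth flagging is the sign of $2d\alpha-3$: when $\alpha\leq 3/(2d)$ the bound $\frac{3\alpha}{2d\alpha-3}$ is non-positive, so the conclusion is either vacuous or automatic (since $lct$ values are positive); the content of the theorem lies in the regime $\alpha>3/(2d)$, where inverting $\frac{2d\alpha-3}{3\alpha}>0$ is legitimate. One should also remark that Proposition \ref{twocurves} genuinely produces a curve $C_g$ distinct from $C_f$ (as is visible from the construction $g':=g-\tfrac{g_{ij}}{f_{ij}}f$ in its proof), so the statement ``$\mathcal{P}$ contains a curve $C_g$'' with the desired lct bound is meaningful and not tautologically fulfilled by $C_g=C_f$.
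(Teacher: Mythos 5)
Your argument is exactly the paper's proof: invoke Proposition \ref{unstable} to get a destabilizing $\lambda$, apply Proposition \ref{twocurves} to $C_f$ to produce $C_g$, bound both weights via Lemma \ref{boundlct}, and rearrange; your added remarks on the sign of $2d\alpha-3$ and the distinctness of $C_g$ are harmless refinements the paper leaves implicit. The only cosmetic slip is saying the inequality of Proposition \ref{twocurves} is "replaced by its non-strict version" in the not-stable case — it is already non-strict, and only the inequality from Proposition \ref{unstable} changes — which does not affect correctness.
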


\begin{proof}
 If $\mathcal{P}$ is unstable (resp. not stable), then by Proposition \ref{unstable} we can choose a one-parameter subgroup $\lambda$ (and coordinates in $\mathbb{P}^2$) so that
\[
\frac{2d}{3} < \frac{\omega(\mathcal{P},\lambda)}{(a_x-a_z)+(a_y-a_z)} \qquad (\mbox{resp.}\,\,\leq)
\]
By Proposition \ref{twocurves}, we can find a  a curve $C_g$ in $\mathcal{P}$ such that
\[
\frac{\omega(\mathcal{P},\lambda)}{(a_x-a_z)+(a_y-a_z)} \leq \frac{\omega(f,\lambda)}{(a_x-a_z)+(a_y-a_z)}+ \frac{\omega(g,\lambda)}{(a_x-a_z)+(a_y-a_z)}
\]
Moreover, by Lemma \ref{boundlct} we have that
\[
\frac{\omega(f,\lambda)}{(a_x-a_z)+(a_y-a_z)}\leq \frac{1}{lct(\mathbb{P}^2,C_f)} \quad \mbox{and} \quad \frac{\omega(g,\lambda)}{(a_x-a_z)+(a_y-a_z)}\leq \frac{1}{lct(\mathbb{P}^2,C_g)}
\]
And, because $lct(\mathbb{P}^2,C_f)=\alpha$, combining the above inequalities we conclude that
\[
\frac{2d}{3}-\frac{1}{\alpha} < \frac{1}{lct(\mathbb{P}^2,C_g)} \qquad (\mbox{resp.}\,\,\leq) \iff lct(\mathbb{P}^2,C_g)< \frac{3\alpha}{2d\alpha-3} \qquad (\mbox{resp.}\,\, \leq )
\]
\end{proof}

\begin{thm}
If $\mathcal{P}\in \mathscr{P}_d$ is semistable (resp. stable), then for any curve $C_f$ in $\mathcal{P}$ and any base point $p$ of $\mathcal{P}$ we have $\frac{3}{2d} \leq lct_p(\mathbb{P}^2,C_f)$ (resp. $<$).
\label{onequarter}
\end{thm}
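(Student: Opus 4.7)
The plan is to combine the contrapositive of the Hilbert--Mumford criterion (Proposition \ref{unstable}) with the bound supplied by Proposition \ref{boundlct2}. Given a base point $p$ of $\mathcal{P}$, I would first invoke Proposition \ref{boundlct2} to produce coordinates on $\mathbb{P}^2$ and a (normalized) one-parameter subgroup $\lambda : \mathbb{C}^{\times} \to SL(V)$ such that every $C_f \in \mathcal{P}$ satisfies
\[
\frac{(a_x-a_z)+(a_y-a_z)}{\omega(\mathcal{P},\lambda)} \leq lct_p(\mathbb{P}^2, C_f).
\]
Then, since $\mathcal{P}$ is semistable (resp. stable), Proposition \ref{unstable} applied to this same $\lambda$ yields
\[
\omega(\mathcal{P},\lambda) \leq \tfrac{2d}{3}\bigl((a_x-a_z)+(a_y-a_z)\bigr) \quad (\text{resp. } <).
\]
Inverting this and chaining with the previous display gives $lct_p(\mathbb{P}^2, C_f) \geq \frac{3}{2d}$ (resp. $>$), as required.

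The one step that warrants justification is the inversion of the $\omega$-inequality: to preserve the direction of the inequality I need $\omega(\mathcal{P},\lambda) > 0$. This is essentially built into the construction in Proposition \ref{boundlct2}. The weights there are $a_x = 1$, $a_y = a$, $a_z = -1-a$ with $a \in \mathbb{Q} \cap (-1/2, 1]$, so that both $a_x - a_z = 2+a$ and $a_y - a_z = 2a+1$ are strictly positive. Taking $p = (0:0:1)$, the base-point condition forces $f_{00} = g_{00} = 0$ for any two generators $C_f, C_g$ of $\mathcal{P}$; consequently every nonzero minor $m_{ijkl}$ has $(i,j) \neq (0,0)$ and $(k,l) \neq (0,0)$, so $(a_x-a_z)(i+k) + (a_y-a_z)(j+l) > 0$ and hence $\omega(\mathcal{P},\lambda) > 0$.

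I do not anticipate a serious obstacle: given the machinery already developed, the argument is a direct combination. The point that makes it work so cleanly is that Proposition \ref{boundlct2} produces a single $\lambda$ that is simultaneously uniform over every $C_f \in \mathcal{P}$ and anchored at the fixed base point $p$. This way, the universal upper bound on $\omega(\mathcal{P},\lambda)$ coming from (semi)stability and the curve-independent lower bound on $lct_p$ meet on the same one-parameter subgroup, producing the numerical threshold $3/(2d)$.
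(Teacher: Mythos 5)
Your proof is correct and follows essentially the same route as the paper: choose the one-parameter subgroup from Proposition \ref{boundlct2} anchored at the base point $p=(0:0:1)$, use semistability (resp. stability) via Proposition \ref{unstable} to bound $\omega(\mathcal{P},\lambda)$ by $\frac{2d}{3}\bigl((a_x-a_z)+(a_y-a_z)\bigr)$, and chain the two inequalities. Your explicit check that $\omega(\mathcal{P},\lambda)>0$ (from $f_{00}=g_{00}=0$ and the positivity of $a_x-a_z$ and $a_y-a_z$), which legitimizes inverting the inequality, is a detail the paper leaves implicit.
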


\begin{proof}
Fix $\mathcal{P}\in \mathscr{P}_d$ and a base point $p$ as above. Given $C_f$ we can always find coordinates in $\mathbb{P}^2$ so that $p=(0:0:1)$ and we can choose $\lambda$ as in Proposition \ref{boundlct2}. Because $\mathcal{P}$ is semistable (resp. stable) for this $\lambda$ we have that
\[
\frac{3}{2d}\leq \frac{(a_x-a_z)+(a_y-a_z)}{\omega(\mathcal{P},\lambda)} \qquad (\mbox{resp.}\,\,<)
\]
and the result follows from Proposition \ref{boundlct2}.
\end{proof}

\begin{rmk}
It is important to observe that Theorems \ref{alpha} and \ref{onequarter} can be easily generalized for hypersurfaces of degree $d$ in $\mathbb{P}^n$. The corresponding statements are presented below.
\label{hypersurfaces}
\end{rmk}

\begin{thm}[Analogue of Theorem \ref{alpha}]
Let $\mathcal{P}$ be a pencil of hypersurfaces of degree $d$ in $\mathbb{P}^n$ which contains a hypersurface $F(f=0)$ such that $lct(\mathbb{P}^n,F)=\alpha$. If $\mathcal{P}$ is unstable (resp. not stable), then $\mathcal{P}$ contains a hypersurface $G(g=0)$ such that $lct(\mathbb{P}^n,G)< \frac{\alpha (n+1)}{2d\alpha-(n+1)}$ (resp. $\leq \frac{\alpha(n+1)}{2d\alpha-(n+1)}$).
\end{thm}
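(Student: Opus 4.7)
The plan is to repeat the proof of Theorem \ref{alpha} verbatim in $\mathbb{P}^n$, with the constant $3$ (the dimension of $V$ in the $\mathbb{P}^2$ case) replaced throughout by $n+1$. Concretely, I would first generalize the three key ingredients used there --- Proposition \ref{unstable}, Proposition \ref{twocurves} and Lemma \ref{boundlct} --- to hypersurfaces and pencils of hypersurfaces in $\mathbb{P}^n$, and then chain them together exactly as in the proof of Theorem \ref{alpha}.

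Fix a one-parameter subgroup $\lambda:\mathbb{C}^\times \to SL(V)$ with $\dim V = n+1$, normalized in coordinates $[x_0:\cdots:x_n]$ so that it acts diagonally with weights $a_0 \geq a_1 \geq \cdots \geq a_n$ satisfying $\sum_k a_k = 0$. For a degree-$d$ hypersurface $F = \sum_{|I|=d} f_I x^I$ and for a pencil $\mathcal{P}$ with generators $F,G$ and Pl\"ucker coordinates $m_{IJ} = f_I g_J - g_I f_J$ (where $I,J$ are multi-indices of length $n+1$ summing to $d$), I set
\[
\omega(F,\lambda) \doteq \min\Bigl\{\sum_{k=0}^{n}(a_k - a_n)\,i_k \,:\, f_I \neq 0\Bigr\},
\]
\[
\omega(\mathcal{P},\lambda) \doteq \min\Bigl\{\sum_{k=0}^{n}(a_k - a_n)(i_k + j_k) \,:\, m_{IJ} \neq 0\Bigr\}.
\]
Using the identity $\sum_{k=0}^{n-1}(a_k - a_n) = -(n+1)a_n$, the Hilbert--Mumford criterion translates into the $\mathbb{P}^n$-analogue of Proposition \ref{unstable}: $\mathcal{P}$ is unstable (resp.\ not stable) iff there exist $\lambda$ and coordinates with $\omega(\mathcal{P},\lambda) > \tfrac{2d}{n+1}\sum_{k<n}(a_k-a_n)$ (resp.\ $\geq$), and similarly for single hypersurfaces with $\tfrac{d}{n+1}$ in place of $\tfrac{2d}{n+1}$.

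The two-generator trick of Proposition \ref{twocurves} extends verbatim: given any $F \in \mathcal{P}$, pick a multi-index $I_0$ with $f_{I_0} \neq 0$ realising $\omega(F,\lambda)$ and replace a second generator $G$ by $G - (g_{I_0}/f_{I_0})F$ to force $g_{I_0} = 0$; then $m_{I_0 J} \neq 0$ whenever $g_J \neq 0$, which yields $\omega(\mathcal{P},\lambda) \leq \omega(F,\lambda) + \omega(G,\lambda)$. The $\mathbb{P}^n$-version of Lemma \ref{boundlct},
\[
\frac{\omega(F,\lambda)}{\sum_{k<n}(a_k-a_n)} \leq \frac{1}{lct(\mathbb{P}^n,F)},
\]
follows from \cite{singpairs}*{Proposition 8.13} applied on the affine chart $x_n \neq 0$ with weights $a_k - a_n$ on the variable $x_k$; the degenerate situations where $a_k = a_n$ for several $k<n$ are handled exactly as in the second half of the proof of Lemma \ref{boundlct}, using that an abnormally small weighted multiplicity forces $F$ to contain a coordinate hyperplane with multiplicity $\geq 2$.

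Putting the three pieces together: if $\mathcal{P}$ is unstable (resp.\ not stable) and $lct(\mathbb{P}^n,F) = \alpha$, one chooses $\lambda$ witnessing instability, then the associated $G$, and chains the bounds to obtain
\[
\frac{2d}{n+1} < \frac{\omega(F,\lambda)+\omega(G,\lambda)}{\sum_{k<n}(a_k-a_n)} \leq \frac{1}{\alpha} + \frac{1}{lct(\mathbb{P}^n,G)} \qquad (\text{resp. } \leq),
\]
which rearranges to $lct(\mathbb{P}^n,G) < \alpha(n+1)/(2d\alpha-(n+1))$ (resp.\ $\leq$). The main obstacle is purely bookkeeping --- managing the multi-index notation cleanly and verifying the degenerate sub-cases of the Lemma \ref{boundlct} analogue where several of the top weights coincide with $a_n$ --- since no new geometric input beyond the $\mathbb{P}^2$ proof is required.
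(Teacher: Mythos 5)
Your proposal takes exactly the route the paper intends: Remark \ref{hypersurfaces} gives no separate argument, and the statement is meant to follow by rerunning the proof of Theorem \ref{alpha} with $3$ replaced by $n+1$. Your $\mathbb{P}^n$ versions of Proposition \ref{unstable} (via $\sum_{k<n}(a_k-a_n)=-(n+1)a_n$), of the two-generator trick of Proposition \ref{twocurves}, and the final chaining of inequalities are all correct.

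The one step that is not right as described is the degenerate subcase of the Lemma \ref{boundlct} analogue. For $n\geq 3$ it can happen that $a_k=a_n$ for some but not all $k<n$ (e.g.\ weights $(3,1,-2,-2)$), so that several affine variables retain a positive weight while others get weight zero; then $\omega(F,\lambda)$ is a genuinely weighted multiplicity in the positive-weight variables, not the multiplicity of a coordinate hyperplane, and the ``second half'' of the proof of Lemma \ref{boundlct} (a hyperplane component of multiplicity $c\geq 2$) does not apply --- that argument only covers the case where exactly one of the weights $a_k-a_n$, $k<n$, is positive, which is the only degenerate case occurring when $n=2$. The fix is easy and worth stating: either assign weight $\varepsilon>0$ to the zero-weight variables, apply Proposition 8.13 of \cite{singpairs}, and let $\varepsilon\to 0$ (the weighted multiplicity only increases under the perturbation, and $lct_p$ is a fixed number, so the bound
\[
\frac{\omega(F,\lambda)}{\sum_{k<n}(a_k-a_n)}\leq \frac{1}{lct(\mathbb{P}^n,F)}
\]
survives the limit), or equivalently use the monomial valuation with weights $a_k-a_n$ on the affine chart, whose log discrepancy is $\sum_{k<n}(a_k-a_n)$ and whose value on $\tilde f$ is $\omega(F,\lambda)$. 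With that replacement the proof goes through as you outline.
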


\begin{thm}[Analogue of Theorem \ref{onequarter}]
If $\mathcal{P}$ is semi-stable (resp. stable), then for any hypersurface $F(f=0)$ in $\mathcal{P}$ and any base point $p$ of $\mathcal{P}$ we have that
\[
\frac{n+1}{2d} \leq lct_p(\mathbb{P}^{n},F) \qquad (\mbox{resp.}\,\, <)
\]
\end{thm}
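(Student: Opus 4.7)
The plan is to generalize the toolkit of Sections \ref{sc} and \ref{stablct} from $\mathbb{P}^2$ to $\mathbb{P}^n$ and then imitate the proof of Theorem \ref{onequarter} verbatim. Writing a hypersurface $F=(f=0)$ of degree $d$ in $\mathbb{P}^n$ as $f=\sum_I f_I x^I$ with $x=(x_0,\ldots,x_n)$ and $|I|=d$, and taking two generators $F$, $G$ of a pencil $\mathcal{P}$ with Pl\"ucker coordinates $m_{IJ}=f_Ig_J-g_If_J$, the natural $n$-dimensional affine weight of $\mathcal{P}$ at a normalized one-parameter subgroup $\lambda$ with weights $a_0\geq\cdots\geq a_n$, $\sum a_i=0$, $a_0>0$, is
\[
\omega(\mathcal{P},\lambda)\doteq\min\Bigl\{\sum_{k=0}^{n-1}(a_k-a_n)(I_k+J_k)\,:\,m_{IJ}\neq 0\Bigr\},
\]
and the analogue of Proposition \ref{unstable} states that $\mathcal{P}$ is semistable (resp. stable) iff $\omega(\mathcal{P},\lambda)\leq \tfrac{2d}{n+1}\sum_{k=0}^{n-1}(a_k-a_n)$ (resp.\ $<$) for every $\lambda$. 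The analogue $\omega(\mathcal{P},\lambda)\geq\omega(f,\lambda)+\omega(g,\lambda)$ of Proposition \ref{ineq} for any two generators follows from the same case analysis used for $n=2$.

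Next I would prove the $\mathbb{P}^n$ analogue of Proposition \ref{boundlct2}: for any base point $p$ of $\mathcal{P}$ there exists a $\lambda$ (and coordinates on $\mathbb{P}^n$) with $\sum_{k=0}^{n-1}(a_k-a_n)/\omega(\mathcal{P},\lambda)\leq lct_p(\mathbb{P}^n,F)$ for every $F\in\mathcal{P}$. Choose coordinates so that $p=(0:\cdots:0:1)$; then every $F\in\mathcal{P}$ has $f_{(0,\ldots,0,d)}=0$, hence $m_{IJ}=0$ as soon as either $I$ or $J$ equals $(0,\ldots,0,d)$. One picks a one-parameter family of normalized 1-PS (analogous to the family parametrized by $a\in\mathbb{Q}\cap(-1/2,1]$ in dimension two) giving positive weights $\omega_k\doteq a_k-a_n$ on the affine chart $u_k=x_k/x_n$. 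By construction the weighted multiplicity of $\tilde f(u)=f(u_0,\ldots,u_{n-1},1)$ at the origin equals $\omega(f,\lambda)$. Using the finite cover $\varphi:(u_0,\ldots,u_{n-1})\mapsto(u_0^{\omega_0},\ldots,u_{n-1}^{\omega_{n-1}})$, the ramification formula
\[
\varphi^{\ast}\bigl(K_{\mathbb{C}^n}+c\,\tilde f\bigr)=K_{\mathbb{C}^n}+\sum_{k=0}^{n-1}(1-\omega_k)H_{u_k}+c\,\varphi^\ast\tilde f,
\]
together with \cite{km}*{Proposition 5.20(4)}, reduces log canonicity of $(\mathbb{C}^n,c\tilde f)$ at $0$ to log canonicity of the right-hand pair. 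Setting $c=\sum_k\omega_k/\omega(\mathcal{P},\lambda)$, the discrepancy of the exceptional divisor $E$ of the blow-up of the origin is
\[
a(E;\mathbb{C}^n,\Delta)=-1+\sum_{k=0}^{n-1}\omega_k-c\,\omega(f,\lambda),
\]
and the assumption $lct_p(\mathbb{P}^n,F)<c$ forces $a(E;\mathbb{C}^n,\Delta)<-1$, i.e.\ $\omega(\mathcal{P},\lambda)<\omega(f,\lambda)$, contradicting the $n$-dimensional version of Proposition \ref{ineq}.

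Once this is in hand, the theorem is immediate: if $\mathcal{P}$ is semistable (resp.\ stable), then for the $\lambda$ furnished above Hilbert--Mumford gives
\[
\frac{n+1}{2d}\leq\frac{\sum_{k=0}^{n-1}(a_k-a_n)}{\omega(\mathcal{P},\lambda)}\leq lct_p(\mathbb{P}^n,F)\qquad(\text{resp.\ }<).
\]

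The main obstacle I anticipate is the combinatorial calibration inside the generalized Proposition \ref{boundlct2}: in the $\mathbb{P}^2$ case the specific family $a_x=1,a_y=a,a_z=-1-a$ with $a\in\mathbb{Q}\cap(-1/2,1]$ had to be chosen so that $c\leq 1$ (equivalently $\sum\omega_k\leq\omega(\mathcal{P},\lambda)$) for every pencil, which used the pointwise inequality derived from the vanishing of $m_{(0,0),(k,l)}$ together with a specific arithmetic range for $a$. Pinning down the right higher-dimensional range of admissible weight vectors is the delicate step; since for a base point of the form $(0:\cdots:0:1)$ only the single monomial $x_n^d$ is forbidden, I expect the range of admissible $(\omega_0,\ldots,\omega_{n-1})$ to be larger (and the required bound easier to verify) than in dimension two, but it still has to be specified carefully so that \cite{km}*{Proposition 5.20(4)} applies with a coefficient $c\in[0,1]$.
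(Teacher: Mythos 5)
Your overall plan is the one the paper intends (the paper gives no written proof of this analogue; Remark \ref{hypersurfaces} just asserts that the proof of Theorem \ref{onequarter} generalizes), and your generalizations of Proposition \ref{unstable} and Proposition \ref{ineq} are fine. But the step you flag as ``delicate'' is not merely delicate --- as you have set it up it genuinely fails for $n\geq 3$. Your argument needs a normalized $\lambda$ with all $\omega_k=a_k-a_n>0$ (so that $\varphi$ is finite) and with $c=\sum_{k=0}^{n-1}\omega_k/\omega(\mathcal{P},\lambda)\leq 1$; note $c\le 1$ is forced anyway, since every member passes through the base point and hence has $lct_p\leq 1$. In dimension two the single excluded index $(0,0)$ suffices because the worst admissible pair $\{(1,0),(0,1)\}$ has weight exactly $\omega(u)+\omega(v)$, and the pair $\{(0,1),(0,2)\}$ can be handled by calibrating $a$. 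In dimension $n\geq 3$ the base-point condition only kills Pl\"ucker coordinates $m_{IJ}$ with $I$ or $J$ equal to $(0,\ldots,0,d)$; it does not exclude, say, $I=(0,\ldots,0,1,d-1)$ and $J=(0,\ldots,1,0,d-1)$, whose affine weight is $\omega_{n-1}+\omega_{n-2}$, and this is strictly smaller than $\sum_{k=0}^{n-1}\omega_k$ for every choice of strictly positive weights once $n\geq 3$. So, contrary to your expectation, no admissible positive weight vector makes the analogue of the paper's pointwise inequality hold, and this is not just a failure of the method: for a pencil all of whose members are smooth at $p$ with varying tangent hyperplanes (e.g.\ $f=x_0x_n^{d-1}$, $g=x_1x_n^{d-1}+x_0^d$, $p=(0:\cdots:0:1)$), every member $H$ has $\omega(h,\lambda)$ bounded by the weighted order of its (nonzero) linear part, and the analogue of Proposition \ref{twocurves} then gives $\omega(\mathcal{P},\lambda)\leq \omega_0+\omega_1<\sum_k\omega_k$ in any coordinates, while $lct_p=1$ for all members; hence the inequality $\sum_k\omega_k/\omega(\mathcal{P},\lambda)\leq lct_p(F)$ you want from the generalized Proposition \ref{boundlct2} is unattainable with strictly positive weights.

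Consequently the proof cannot be completed ``verbatim'': you must either allow degenerate normalized one-parameter subgroups (some $a_k=a_n$, as in the example above, where weights $(\omega_0,\omega_1,0,\ldots,0)$ in adapted coordinates do give $\omega(\mathcal{P},\lambda)=\omega_0+\omega_1$), in which case the finite-cover trick and the discrepancy computation via Proposition 5.20(4) of Koll\'ar--Mori no longer apply as stated and a different comparison between the (partially) weighted order and $lct_p$ is required, or you must make a per-member choice of $\lambda$ exploiting the actual singularity of the member whose log canonical threshold is small, rather than a single $\lambda$ calibrated only by the vanishing of the monomial $x_n^d$. As written, the proposal has a genuine gap at exactly this point, and the heuristic ``only one monomial is forbidden, so the bound is easier'' is the opposite of the truth: fewer forced vanishings make the lower bound on $\omega(\mathcal{P},\lambda)$ weaker, while the quantity $\sum_k\omega_k$ to be dominated grows with $n$.
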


\section{Stability and the multiplicity at a base point}
\label{sm}

We now relate $\omega(\mathcal{P},\lambda)$ to the multiplicity of the generators of $\mathcal{P}$ at a base point. Our result is the following: 

\begin{thm}
Let $\mathcal{P}$ be a pencil in $\mathscr{P}_d$ with generators $C_f$ and $C_g$. If there exists a base point $P$ of $\mathcal{P}$ such that $\text{mult}_P(C_f)+\text{mult}_P(C_g)>\frac{4d}{3}$ (resp. $\geq$), then $\mathcal{P}$ is unstable (resp. not stable).
\label{unsmult}
\end{thm}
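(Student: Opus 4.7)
The plan is to apply Proposition \ref{unstable} with a one-parameter subgroup tailored to the base point $P$. After choosing coordinates in $\mathbb{P}^2$ so that $P=(0:0:1)$, I will use the normalized $\lambda$ with weights $a_x=a_y=1$ and $a_z=-2$. This is an admissible normalization ($a_x\ge a_y\ge a_z$, $a_x>0$, sum zero), and it makes the two key quantities very clean: $(a_x-a_z)=(a_y-a_z)=3$, so $a_x+a_y-2a_z=6$, and the instability threshold in Proposition \ref{unstable} becomes $\omega(\mathcal{P},\lambda) > 4d$ (resp.\ $\ge$).

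With these weights, the affine weight of the pencil simplifies to
\[
\omega(\mathcal{P},\lambda) \;=\; 3\,\min\bigl\{(i+j)+(k+l)\;:\;m_{ijkl}\neq 0\bigr\}.
\]
The key observation is that since $P=(0:0:1)$ corresponds to the affine chart $z=1$, one has $\text{mult}_P(C_f) = \min\{i+j : f_{ij}\neq 0\}$ and likewise for $g$. Now I claim
\[
\min\bigl\{(i+j)+(k+l)\;:\;m_{ijkl}\neq 0\bigr\} \;\geq\; \text{mult}_P(C_f)+\text{mult}_P(C_g).
\]
Indeed, if $m_{ijkl}=f_{ij}g_{kl}-f_{kl}g_{ij}\neq 0$, then either the first product is nonzero (in which case $i+j\ge \text{mult}_P(C_f)$ and $k+l\ge \text{mult}_P(C_g)$) or the second product is nonzero (in which case the roles of the two index pairs swap, giving the same bound on the sum).

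Combining these gives $\omega(\mathcal{P},\lambda)\ge 3\bigl(\text{mult}_P(C_f)+\text{mult}_P(C_g)\bigr)$. Under the hypothesis $\text{mult}_P(C_f)+\text{mult}_P(C_g)>\tfrac{4d}{3}$ (resp.\ $\ge$), we get $\omega(\mathcal{P},\lambda)>4d$ (resp.\ $\ge$), which is exactly the unstable (resp.\ non-stable) condition of Proposition \ref{unstable} for this $\lambda$. There is no real obstacle here; the only subtlety is making sure the chosen $\lambda$ really is normalized and that the elementary Plücker bound is carried out with the right case analysis, since $m_{ijkl}$ involves a difference that might vanish even when individual entries do not. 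That case analysis is precisely the one above and is what makes the argument work uniformly without assuming anything about cancellations between $f$ and $g$.
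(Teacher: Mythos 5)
Your proof is correct and follows essentially the same route as the paper: the same normalized one-parameter subgroup with weights $(1,1,-2)$ at $P=(0:0:1)$, the same identification of multiplicities with affine weights, and the same appeal to Proposition \ref{unstable}. The only difference is that you re-derive the inequality $\omega(\mathcal{P},\lambda)\geq \omega(f,\lambda)+\omega(g,\lambda)$ by the minor-by-minor case analysis instead of citing Proposition \ref{ineq}, which is exactly the argument used there.
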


\begin{proof}
If $P$ is any base point of $\mathcal{P}$, we can always choose coordinates so that we have $P=(0:0:1)$. Let $a_x=1,a_y=1,a_z=-2$ and $\lambda$ be the one-parameter subgroup which in these coordinates is normalized as in (\ref{normal}). Then $\omega(f,\lambda)=3\cdot \text{mult}_P(C_f)$ and $\omega(g,\lambda)=3\cdot \text{mult}_P(C_g)$ for any choice of generators of $\mathcal{P}$, say $C_f$ and $C_g$. These two equalities, together with Proposition \ref{ineq}, imply
\[
\frac{\omega(\mathcal{P},\lambda)}{(a_x-a_z)+(a_y-a_z)}\geq 3\cdot \frac{\text{mult}_P(C_f)+\text{mult}_P(C_g)}{(a_x-a_z)+(a_y-a_z)}
\]
And since $(a_x-a_z)+(a_y-a_z)=6$, the result then follows from the Hilbert-Mumford criterion (Proposition \ref{unstable}).
\end{proof}

\bibliography{references}

\end{document}